\newtheorem{definition}{Definition}[section]
\newtheorem{theorem}{Theorem}[section]
\newtheorem{lemma}{Lemma}[section]
\newtheorem{remark}{Remark}[section]
\numberwithin{equation}{section}
\begin{document}

\title{\bf The capillary Orlicz-Minkowski problem}
\date{}

\author{Xudong Wang, Baocheng Zhu}

\maketitle

\begin{abstract}

In this paper, we introduce a Robin boundary analogue of the Orlicz-Minkowski problem, which seeks to find a capillary convex body with a prescribed capillary Orlicz surface area measure in the upper Euclidean half-space. We obtain the volume-normalized smooth solutions to the capillary even Orlicz-Minkowski problem by the continuity method. In addition, we also establish a capillary Orlicz-Brunn-Minkowski inequality and a capillary Orlicz-Minkowski inequality, which can change our solutions to a spherical cap under some conditions.

\end{abstract}

\textbf{MSC2020:} 53C42, 53C45, 35J66.

\textbf{Keywords:} Capillary hypersurface, Capillary Orlicz-Minkowski problem, Robin boundary value condition, Capillary Orlicz-Brunn-Minkowski inequality.

\section{Introduction}

Let $\mathbb{R}^{n+1}_+=\{x\in\mathbb{R}^{n+1}\,|\,x_{n+1}>0\}$ be the upper Euclidean half-space. We call a hypersurface $\Sigma$ in $\mathbb{R}^{n+1}_+$ with boundary $\partial\Sigma\subset\partial\mathbb{R}^{n+1}_+$ capillary if it intersects with $\partial\mathbb{R}^{n+1}_+$ at a constant contact angle $\theta\in(0,\pi)$. If $\Sigma$ is $C^2$-smooth convex hypersurface in $\mathbb{R}^{n+1}_+$ with positive curvature, then the domain $\widehat{\Sigma}$ bounded by $\Sigma$ and $\partial\mathbb{R}^{n+1}_+$ is called a capillary convex body, and we denote by $|\widehat{\Sigma}|=V(\widehat{\Sigma})$ the volume of $\widehat{\Sigma}$. Denote by $\mathcal{K}_\theta$ the family of all capillary convex bodies in $\overline{\mathbb{R}^{n+1}_+}$, and by $\mathcal{K}_\theta^\circ$ the subfamily of $\mathcal{K}_\theta$ whose elements contain the origin in the interior of their flat boundary.

A simple example of a capillary convex hypersurface is the following spherical cap $\mathcal{C}_\theta$:
\begin{equation*}
\mathcal{C}_\theta=\{\xi\in\overline{\mathbb{R}^{n+1}_+}\,:\,|\xi-\cos\theta e|=1\},
\end{equation*}
where $e=(0,\cdots,0,-1)$. Then, the capillary Gauss map is defined by
\begin{equation*}
\begin{aligned}
\widetilde{\nu}:\Sigma&\to\mathcal{C}_\theta\\
X&\mapsto\nu(X)+\cos\theta e,
\end{aligned}
\end{equation*}
where $\nu$ is the usual Gauss map. It is easy to check that the capillary Gauss map $\widetilde{\nu}$ is a diffeomorphism map between $\Sigma$ and $\mathcal{C}_\theta$.

In \cite{Mei-Wang-Weng-The_capillary_Minkowski_problem}, Mei, Wang and Weng proposed a capillary Minkowski problem, which asks for a capillary convex body $\widehat{\Sigma}\in\mathcal{K}_\theta$ such that its the Gauss-Kronecker curvature satisfies
\begin{equation*}
K(\widetilde{\nu}^{-1}(\xi))=f(\xi),\ \ \forall\xi\in\mathcal{C}_\theta,
\end{equation*}
for a given a positive, smooth function $f$ on $\mathcal{C}_\theta$. By the continuity method as Cheng-Yau \cite{Cheng-Yau-Regularity_Minkowski_Problem} and Lions-Trudinger-Urbas \cite{Lions-Trudinger-Urbas-The_Neumann_problem_for_equations_of_Monge_Ampere_type}, they obtained smooth solutions to the capillary Minkowski problem. It is very meaningful that the capillary Minkowski problem is actually a natural Robin boundary version of the classical Minkowski problem. For the solutions to the classical Minkowski problem, one can refer to Minkowski \cite{Minkowski-konvexen_Polyeder}, Aleksandrov \cite{Aleksandrov-surface_area_measure}, Fenchel and Jessen \cite{Fenchel-Jessen}, Lewy \cite{Lewy-differentialgeometricinlarge}, Nirenberg \cite{Nirenberg}, Pogorelov \cite{Pogorelovbook}, Cheng and Yau \cite{Cheng-Yau-Regularity_Minkowski_Problem}, Caffarelli \cite{Caffarelli-Interior_W_Estimates,Caffarelli-viscosity_solutions}.

In the 1990s, Lutwak \cite{Lutwak-The_Brunn_Minkowski_Firey_Theory_I} introduced the $L_p$-Minkowski problem using Firey's $p$-sum \cite{Firey-p_Means_of_convex_bodies}. The $L_p$-Minkowski problem has become one of the core problems in convex geometry and geometric partial differential equations, which also includes the famous logarithmic Minkowski problem and centroaffine Minkowski problem. On the solutions to the $L_p$-Minkowski problem, one can refer to \cite{Lutwak-The_Brunn_Minkowski_Firey_Theory_I,Lutwak-Oliker,Bianchi-Boroczky-Colesanti-Smoothness_in_the_L_p_Minkowski_problem_for_p_1,
Bianchi-Boroczky-Colesanti-Yang-The_L_p_Minkowski_problem_for_-n_p_1,Boroczky-Lutwak-Yang-Zhang-The_logarithmic_Minkowski_problem,
Bryan-Ivaki-Scheuer-A_unified_flow_approach_to_smooth_even_L_p_Minkowski_problems,Chen-Li-Zhu-On_the_L_p_Monge_Ampere_equation,
Chou-Wang-The_L_p_Minkowski_problem_and_the_Minkowski_problem_in_centroaffine_geometry,
Guang-Li-Wang-Existence_of_convex_hypersurfaces_with_prescribed_centroaffine_curvature,
Guang-Li-Wang-The_L_p_Minkowski_problem_with_super_critical_exponents,Guan-Lin-On_the_equation_det_on_S_n,
He-Li-Wang-Multiple_solutions_of_the_Lp_Minkowski_problem,Huang-Lu-On_the_regularity_of_the_Lp_Minkowski_problem,
Hug-Lutwak-Yang-Zhang-On_the_L_p_Minkowski_problem_for_polytopes,Jian-Lu-Wang-Nonuniqueness_of_solutions_to_the_Lp_Minkowski_problem,
Lutwak-Yang-Zhang-On_the_L_p_Minkowski_problem,Lu-Wang-Rotationally_symmetric_solutions_to_the_Lp_Minkowski_problem,
Zhu-The_centro_affine_Minkowski_problem_for_polytopes,Zhu-The_logarithmic_Minkowski_problem_for_polytopes,
Zhu-The_Lp_Minkowski_problem_for_polytopes_for_0_p_1,Zhu-The_Lp_Minkowski_problem_for_polytopes_for_0_p_1}, and the references in. Similar to the methods in Guan-Lin \cite{Guan-Lin-On_the_equation_det_on_S_n}, Mei, Wang and Weng obtained smooth solutions to the capillary $L_p$-Minkowski problem for $p\geqslant1$. Furthermore, the case of $-n<p<1$ was solved by Hu and Ivaki \cite{Hu-Ivaki-Capillary_curvature_images}. For more details on the capillary convex hypersurfaces, one can refer to \cite{Mei-Wang-Weng-Convex_capillary_hypersurfaces_of_prescribed_curvature_problem,
Hu-Ivaki-Scheuer-Capillary_Christoffel_Minkowski_problem,Mei-Wang-Weng-The_capillary_Gauss_curvature_flow}, and the references in.

As a further extension of the $L_p$-Minkowski problem, Haberl, Lutwak, Yang and Zhang \cite{Haberl-Lutwak-Yang-Zhang-Orlicz_Minkowski_problem} raised the Orlicz-Minkowski problem. In the smooth case, it is equivalent to solve the Monge-Amp\`{e}re type equation on the unit sphere $\mathbb{S}^n$:
\begin{equation*}
\varphi(h)\det(h_{ij}+h\delta_{ij})=f,
\end{equation*}
where $h$ is the support function of a convex set, $h_{ij}$ represents the twice covariant derivative of $h$ with respect to the standard round metric on $\mathbb{S}^n$, $\varphi$ and $f$ are some given smooth functions on $\mathbb{R}$ and $\mathbb{S}^n$ respectively. When $\varphi(x)=x^{1-p}$, this Orlicz-Minkowski problem is the $L_p$-Minkowski problem. Related to this, the classical Brunn-Minkowski theory has been extended to the Orlicz setting, that is, the Orlicz-Brunn-Minkowski theory and dual Orlicz-Brunn-Minkowski theory. For more details, one can refer to \cite{Gardner-Hug-Weil-The_Orlicz_Brunn_Minkowski_theory,Gardner-Hug-Weil-Ye-The_dual_Orlicz_Brunn_Minkowski_theory,
Lutwak-Yang-Zhang-Orlicz_centroid_bodies,Lutwak-Yang-Zhang-Orlicz_projection_bodies,Zhu-Zhou-Xu-Dual_Orlicz_Brunn_Minkowski_theory,
Jian-Lu-Existence_of_solutions_to_the_Orlicz_Minkowski_problem,Liu-Lu-A_flow_method_for_the_dual_Orlicz_Minkowski_problem,
Zou-Xiong-Orlicz_John_ellipsoids}, and the references in. And for other Minkowski type problems, please see e.g., \cite{Boroczky-Lutwak-Yang-Zhang-Zhao-The_Gauss_Image_Problem,Schneider-book,Huang-Yang-Zhang-Minkowski_problems_for_geometric_measures,
Huang-Lutwak-Yang-Zhang-L_p_Aleksandrov_Problem,Lutwak-Yang-Zhang-Lp_Dual_curvature_measures,Huang-Lutwak-Yang-Zhang-dual_curvature_measures}.

In this paper, we consider the corresponding capillary version of the Orlicz-Minkowski problem. Meanwhile, we also provide a framework of the Orlicz-Brunn-Minkowski theroy for capillary convex bodies. For two capillary convex bodies $\widehat{\Sigma}_1,\widehat{\Sigma}_2\in\mathcal{K}_\theta^\circ$, we will define their Orlicz sum and derive the corresponding Orlicz-Brunn-Minkowski inequality in Section \ref{Preliminaries}. The support function of a capillary convex body $\widehat{\Sigma}\in\mathcal{K}_\theta$ is defined by
\begin{equation*}
h(\xi)=\langle\widetilde{\nu}^{-1}(\xi),\xi-\cos\theta e\rangle, \ \xi\in\mathcal{C}_\theta.
\end{equation*}
In particular, the support function of $\mathcal{C}_\theta$ is $\ell(\xi)=\sin^2\theta+\cos\theta\langle\xi,e\rangle$. We recall the following even functions on $\mathcal{C}_\theta$.
\begin{definition}[see \cite{Mei-Wang-Weng-Convex_capillary_hypersurfaces_of_prescribed_curvature_problem}]
Let $f\in C^2(\mathcal{C}_\theta)$ and $\xi=(\xi_1,\cdots,\xi_n,\xi_{n+1})\in\mathcal{C}_\theta$, we denote $\widehat{\xi}=(-\xi_1,\cdots,-\xi_n,\xi_{n+1})$. If $f(\xi)=f(\widehat{\xi})$, then we call $f$ a even function on $\mathcal{C}_\theta$. A capillary convex body $\widehat{\Sigma}\in\mathcal{K}_\theta^\circ$ is called symmetric if its support function is a even function on $\mathcal{C}_\theta$.
\end{definition}

For the geometric background of the capillary Orlicz-Minkowski problem, please see Section \ref{Preliminaries}. Here, we let $\phi:[0,+\infty)\rightarrow[0,+\infty)$ be a $C^2$-smooth convex function, then the capillary Orlicz-Minkowski problem is actually equivalent to solve the following Robin boundary value problem of the Monge-Amp\`{e}re type equation:
\begin{equation}\label{Robin-problem-of-the-Monge-Ampere-equation}
\left\{
\begin{aligned}
\phi\left(\frac{\ell}{h}\right)h\det(h_{ij}+h\delta_{ij})&=f,\ \ \ \ \ \ \ \ \ \mbox{in}\ \mathcal{C}_\theta,\\
\nabla_\mu h&=\cot\theta h,\ \ \ \mbox{on}\ \partial\mathcal{C}_\theta,
\end{aligned}
\right.
\end{equation}
where $\mu$ is the outward co-normal of $\partial\Sigma$ in $\Sigma$. To ensure the existence and uniqueness of solutions to \eqref{Robin-problem-of-the-Monge-Ampere-equation}, we need some assumptions about $\phi$. Let $\mathcal{O}$ be the class of $C^2$-smooth, strictly increasing, convex, log-concave functions $\phi$ on $[0,+\infty)$ with $\phi(0)=0$, which satisfies
\begin{equation}\label{Assumptions-phi}
\left\{
\begin{aligned}
&A_1: \lim_{x\to0^+}\phi'(x)=0;\\
&A_2: \liminf_{x\to+\infty}\frac{\phi(x)}{x^{n+1}}>0;\\
&A_3: \frac{d}{dx}\log\frac{\phi(x)}{x}\geqslant0,\ \mbox{for all}\ x>0.
\end{aligned}
\right.
\end{equation}
If $\phi(x)=x^p$ for $p\geqslant n+1$, then the assumptions $A_1-A_3$ in \eqref{Assumptions-phi} all hold. Thus, the solutions to \eqref{Robin-problem-of-the-Monge-Ampere-equation} can resolve the capillary $L_p$-Minkowski problem with supercritical exponents in \cite{Mei-Wang-Weng-The_capillary_L_p_Minkowski_problem}.

\begin{definition}
Let $\phi\in\mathcal{O}$. For any $h,f\in C^2(\mathcal{C}_\theta)$, we say that the function $h$ satisfies the orthogonality condition with respect to $f$ if for all $v\in C^2(\mathcal{C}_\theta)$ such that
\begin{equation}\label{Condition-open-proof}
\int_{\mathcal{C}_\theta}\left(\frac{\ell\phi'\left(\frac{\ell}{h}\right)}{h\phi\left(\frac{\ell}{h}\right)}-n-1\right)
\frac{fv}{h\phi\left(\frac{\ell}{h}\right)}=0,
\end{equation}
there holds
\begin{equation*}
\int_{\mathcal{C}_\theta}hv=0.
\end{equation*}
\end{definition}
It is not hard to check that if $\phi(x)=x^p$, then $v=0$ in \eqref{Condition-open-proof}, i.e., the orthogonality condition is trivial in the $L_p$ case. Then, the main result in this paper is stated below.
\begin{theorem}\label{Main-Theorem}
Given $\phi\in\mathcal{O}$ and $\theta\in(0,\frac{\pi}{2})$. Let $f\in C^2(\mathcal{C}_\theta)$ be a even positive function, and satisfy
\begin{equation}\label{Condition-uniqueness}
\frac{1}{n+1}\int_{\mathcal{C}_\theta}f\geqslant\phi(|\widehat{\mathcal{C}_\theta}|^\frac{1}{n+1}).
\end{equation}
There exists a smooth, symmetric $\widehat{\Sigma}\in\mathcal{K}_\theta^\circ$ with $|\widehat{\Sigma}|=1$, such that its support function $h$ solves \eqref{Robin-problem-of-the-Monge-Ampere-equation} and satisfies the orthogonality condition with respect to $f$. Moreover, if equality holds in \eqref{Condition-uniqueness} and $\phi$ is strictly convex, then $\Sigma$ is the spherical cap $|\widehat{\mathcal{C}_\theta}|^{-\frac{1}{n+1}}\mathcal{C}_\theta$.
\end{theorem}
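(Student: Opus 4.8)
The plan is to produce the volume-normalized smooth solution by the continuity method, and then to read off the rigidity statement from the capillary Orlicz--Minkowski inequality established in the paper. A convenient base point for the continuity is the dilated spherical cap: with $r=|\widehat{\mathcal{C}_\theta}|^{-1/(n+1)}$, the support function $h_0=r\ell$ describes $r\mathcal{C}_\theta$, which satisfies the Robin condition, has $|\widehat{r\mathcal{C}_\theta}|=1$, and, using $\det(\ell_{ij}+\ell\delta_{ij})\equiv1$, solves \eqref{Robin-problem-of-the-Monge-Ampere-equation} with datum $f_0=\phi(1/r)\,r^{n+1}\ell=\phi\big(|\widehat{\mathcal{C}_\theta}|^{1/(n+1)}\big)|\widehat{\mathcal{C}_\theta}|^{-1}\ell$. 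Since $\tfrac1{n+1}\int_{\mathcal{C}_\theta}\ell=|\widehat{\mathcal{C}_\theta}|$, this gives $\tfrac1{n+1}\int_{\mathcal{C}_\theta}f_0=\phi(|\widehat{\mathcal{C}_\theta}|^{1/(n+1)})$, so \eqref{Condition-uniqueness} says precisely $\tfrac1{n+1}\int f\ge\tfrac1{n+1}\int f_0$; hence along the affine path $f_t:=(1-t)f_0+tf$, $t\in[0,1]$, each $f_t$ stays even, positive and compatible with \eqref{Condition-uniqueness}, its left side being affine in the datum.

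For each $t$ I would seek an even solution of the coupled problem
\[
\phi(\ell/h)\,h\det(h_{ij}+h\delta_{ij})=\gamma\,f_t\ \text{in }\mathcal{C}_\theta,\qquad \nabla_\mu h=\cot\theta\,h\ \text{on }\partial\mathcal{C}_\theta,\qquad |\widehat\Sigma|=1,
\]
for $(h,\gamma)\in C^{2,\alpha}_{\mathrm{even}}(\mathcal{C}_\theta)\times\mathbb{R}_{>0}$, the constant $\gamma$ acting as a Lagrange multiplier for the volume normalization; at $t=0$ this is solved by $(h_0,1)$. Let $I\subset[0,1]$ be the set of solvable $t$. Openness of $I$ comes from the implicit function theorem: the linearization of the Monge--Amp\`ere operator with its oblique (Robin) boundary term is, after the standard reduction, a formally self-adjoint uniformly elliptic operator of Fredholm index zero whose only possible degeneracy on the even subspace is a one-dimensional kernel; adjoining the scalar $\gamma$ (which pairs non-trivially with the cokernel because $f_t>0$) together with the volume constraint turns the extended linearized map into an isomorphism, and the single residual compatibility relation the linearized equation must satisfy is exactly the orthogonality condition \eqref{Condition-open-proof}. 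This condition therefore propagates along the path and is inherited by the solution at $t=1$, which also has $\gamma=1$; when $\phi(x)=x^p$ the coefficient $\tfrac{\ell\phi'(\ell/h)}{h\phi(\ell/h)}-n-1$ is constant, the linearization is already invertible, and that condition is vacuous.

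Closedness of $I$ is the analytic core: one needs a priori bounds, uniform in $t$, for the even volume-one solutions $h_t$. Given a positive two-sided $C^0$ bound $0<c\le h_t\le C$, the $C^1$ bound comes from convexity of $\widehat\Sigma_t$, the interior $C^2$ estimate and its propagation to $\partial\mathcal{C}_\theta$ follow the oblique-derivative technique of Lions--Trudinger--Urbas in the capillary adaptation of \cite{Mei-Wang-Weng-The_capillary_Minkowski_problem}, and once the equation is uniformly elliptic and concave in $(h_{ij}+h\delta_{ij})$ the Evans--Krylov and Schauder estimates give $h_t\in C^\infty$ with uniform bounds (a comparison argument keeping $\gamma_t$ in a compact subset of $\mathbb{R}_{>0}$). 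The step I expect to be the main obstacle is the $C^0$ estimate: bounding $\max_{\mathcal{C}_\theta}h_t$ above and $\min_{\mathcal{C}_\theta}h_t$ away from $0$ independently of $t$. Here all the structural hypotheses enter: the evenness of $f$ (hence of $h_t$), for which $\max h_t$ is comparable to a width of $\widehat\Sigma_t$, so that integral control of $h_t$ upgrades to an $L^\infty$ bound exactly as in the even $L_p$-Minkowski problem; the growth condition $A_2$, the Orlicz analogue of the supercritical exponent $p\ge n+1$, which makes $h\,\phi(\ell/h)$ blow up like $\ell^{n+1}/h^{n}$ as $h\to0$ and thus, tested against the equation, forbids degeneration to $\partial\mathcal{C}_\theta$; and $A_1$, $A_3$, which control $\phi(\ell/h)$ for $h$ large and keep $x\mapsto\phi(x)/x$ monotone. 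Concretely one integrates the equation against $1$, $\ell$ and $h_t$, using $\tfrac1{n+1}\int\phi(\ell/h_t)h_t\det((h_t)_{ij}+h_t\delta_{ij})=\tfrac{\gamma_t}{n+1}\int f_t$ on one side and a reverse, Orlicz--Minkowski-type inequality on the other; the acuteness $\theta\in(0,\tfrac{\pi}{2})$ is used in these boundary and integral estimates (and ensures $\ell>0$ with the right geometry at $\partial\mathcal{C}_\theta$).

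For the rigidity statement, let $h$ be the solution just constructed, with $|\widehat\Sigma|=1$. Substituting \eqref{Robin-problem-of-the-Monge-Ampere-equation} into the capillary Orlicz mixed volume $V_\phi(\widehat\Sigma,\mathcal{C}_\theta)$ gives
\[
\frac1{n+1}\int_{\mathcal{C}_\theta}f=\frac1{n+1}\int_{\mathcal{C}_\theta}\phi\!\left(\frac{\ell}{h}\right)h\det(h_{ij}+h\delta_{ij})=V_\phi(\widehat\Sigma,\mathcal{C}_\theta),
\]
so \eqref{Condition-uniqueness} is precisely the capillary Orlicz--Minkowski inequality $V_\phi(\widehat\Sigma,\mathcal{C}_\theta)\ge|\widehat\Sigma|\,\phi\big((|\widehat{\mathcal{C}_\theta}|/|\widehat\Sigma|)^{1/(n+1)}\big)=\phi(|\widehat{\mathcal{C}_\theta}|^{1/(n+1)})$. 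When $\phi$ is strictly convex, equality in that inequality forces $\widehat\Sigma$ to be a dilate of $\mathcal{C}_\theta$, and the normalization $|\widehat\Sigma|=1$ pins down the dilation factor, whence $\Sigma=|\widehat{\mathcal{C}_\theta}|^{-1/(n+1)}\mathcal{C}_\theta$.
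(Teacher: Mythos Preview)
Your broad strategy---continuity method along an affine homotopy $f_t$, a priori estimates for closedness, implicit function theorem for openness, and the capillary Orlicz--Minkowski inequality~\eqref{Orlicz-Minkowski-inequality} for the rigidity clause---coincides with the paper's, and your rigidity paragraph is essentially the paper's argument verbatim. But the way you set up the continuity differs in a way that creates a genuine gap. The paper does \emph{not} introduce a Lagrange multiplier: it runs the continuity directly for~\eqref{Robin-problem-of-the-Monge-Ampere-equation} with $f_t=(1-t)\phi(1)\ell+tf$ and base point $h=\ell$, builds the orthogonality condition into the solution space $\mathcal{H}$ from the start, and obtains openness by showing (via the self-adjointness Lemma~\ref{Lemma-symmetric-operator} and the kernel identity Lemma~\ref{Lemma-ker-space}) that $\mathrm{Range}(L_h)=(\mathrm{Ker}\,L_h)^\perp=\mathcal{H}$, so the implicit function theorem applies without any extra scalar. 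In your $(h,\gamma)$ formulation with the volume constraint, $\gamma_t$ is \emph{determined} along the path by $|\widehat\Sigma_t|=1$, and you give no mechanism forcing $\gamma_t\equiv1$; your assertion that the solution at $t=1$ ``also has $\gamma=1$'' is unsupported, and without it you have only solved $\phi(\ell/h)\,h\det(h_{ij}+h\delta_{ij})=\gamma f$, not~\eqref{Robin-problem-of-the-Monge-Ampere-equation}.

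Two smaller points. Your $C^0$ strategy (integrate the equation against $1,\ell,h$ and invoke a ``reverse Orlicz--Minkowski-type'' bound) is quite different from, and vaguer than, the paper's: the upper bound in Lemma~\ref{Lemma-C-0-estimate} is obtained by enclosing $\widehat\Sigma$ in $\widehat{\mathcal{C}_{\theta,R}}$ and integrating $1/\phi(\ell/h)$ times $f$ over a half-cap $\omega(X_0)$ (this is where $A_1$ enters), and the lower bound by evaluating the Neumann form~\eqref{C-0-estimate-proof-formula-4} at the minimum of $u=h/\ell$ and invoking $A_2$ to rule out $u_{\min}\to0$; neither step uses $\theta<\tfrac{\pi}{2}$, which in the paper enters only in the boundary $C^2$ estimate (Subcase~1 of Lemma~\ref{Lemma-C-2-estimate-1}). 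Also, the $C^1$ bound does not come for free ``from convexity'' in this capillary setting; the paper uses a maximum-principle argument with the auxiliary function $\Phi=\log(\tfrac12|\nabla u|^2)+u+Kd$ in Lemma~\ref{Lemma-C-1-estimate}.
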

As a consequence, the volume-normalized solutions to the capillary even $L_p$-Minkowski problem with supercritical exponents ($p\geqslant n+1$) have also been obtained. In particular, we can conclude that the constant $\gamma$ in the capillary $L_{n+1}$-Minkowski problem \cite{Mei-Wang-Weng-The_capillary_L_p_Minkowski_problem} for the volume-normalized solutions is 1.

\vskip 2mm

The organization of the paper: In Section \ref{Preliminaries}, we collect some materials concerning capillary convex bodies. The Orlicz-Brunn-Minkowski inequality and Orlicz-Minkowski inequality of capillary convex bodies will be given in Section \ref{Capillary-Orlicz-Brunn-Minkowski-theory}. The a priori estimates for the capillary even Orlicz-Minkowski problem will be provided in Section \ref{A priori estimates}. In Section \ref{The proof of Theorem}, we show the proof of Theorem \ref{Main-Theorem}.

\section{Preliminaries}\label{Preliminaries}

In this section, we provide some backgrounds of capillary convex bodies. Regarding more details about this, one can refer to \cite{Mei-Wang-Weng-The_capillary_Minkowski_problem,Mei-Wang-Weng-Xia-Alexandrov_Fenchel_inequalities_II}. For a capillary convex body $\widehat{\Sigma}\in\mathcal{K}_\theta$, we denote $\widehat{\partial\Sigma}=\partial(\widehat{\Sigma})\setminus\Sigma$. Let $\nu$ be the unit outward normal of $\Sigma$, the contact angle $\theta$ is defined by
\begin{equation*}
\cos(\pi-\theta)=\langle\nu,e\rangle,\ \mbox{along}\ \partial\Sigma.
\end{equation*}
The spherical cap with radius $r$ refers to
\begin{equation*}
\mathcal{C}_{\theta,r}=\{\xi\in\overline{\mathbb{R}^{n+1}_+}\,:\,|\xi-r\cos\theta e|=r\}.
\end{equation*}
In particular, $\mathcal{C}_\theta=\mathcal{C}_{\theta,1}$. Let $\mu$ be the outward co-normal of $\partial\Sigma$ in $\Sigma$.

Let $\{E_1,\cdots,E_n,E_{n+1}=-e\}$ be the standard basis in $\mathbb{R}^{n+1}$, $\widehat{\Sigma}_1,\widehat{\Sigma}_2\in\mathcal{K}_\theta$ are called horizontally homothetic if
\begin{equation*}
\widehat{\Sigma}_1=r\widehat{\Sigma}_2+x
\end{equation*}
for some $r>0$ and $x\in\text{span}\{E_1,\cdots,E_n\}$. Recall that the capillary Gauss map $\widetilde{\nu}:\Sigma\to\mathcal{C}_\theta,\,X\mapsto\nu(X)+\cos\theta e$ is a diffeomorphism map. By the parametrization of the inverse capillary Gauss map $\widetilde{\nu}^{-1}$, the support function of $\widehat{\Sigma}$ is defined by
\begin{equation*}
h_{\widehat{\Sigma}}(\xi)=\langle X,\nu\rangle=\langle\widetilde{\nu}^{-1}(\xi),\xi-\cos\theta e\rangle, \ \xi\in\mathcal{C}_\theta.
\end{equation*}
In particular, the support function of $\mathcal{C}_\theta$ is
\begin{equation*}
\ell(\xi)=\langle\xi,\xi-\cos\theta e\rangle=|\xi|^2-\cos\theta\langle\xi,e\rangle.
\end{equation*}
For $\xi\in\mathcal{C}_\theta$, we have $|\xi-\cos\theta e|=1$. Squaring it to get $|\xi|^2-2\cos\theta\langle\xi,e\rangle+\cos^2\theta=1$. Thus, we have
\begin{equation*}
\ell(\xi)=|\xi|^2-\cos\theta\langle\xi,e\rangle=\sin^2\theta+\cos\theta\langle\xi,e\rangle.
\end{equation*}

Following \cite{Mei-Wang-Weng-The_capillary_Minkowski_problem}, the capillary support function of $\widehat{\Sigma}$ is defined by
\begin{equation*}
u_{\widehat{\Sigma}}(\xi)=\frac{h_{\widehat{\Sigma}}(\xi)}{\ell(\xi)}, \ \xi\in\mathcal{C}_\theta,
\end{equation*}
and there hold on $\partial\mathcal{C}_\theta$
\begin{equation}\label{Preliminaries-formula-1}
\left\{
\begin{aligned}
&\nabla_\mu h_{\widehat{\Sigma}}=\cot\theta h_{\widehat{\Sigma}},\\
&\nabla_\mu u_{\widehat{\Sigma}}=0.
\end{aligned}
\right.
\end{equation}
In particular, the capillary support function of $\mathcal{C}_\theta$ is $u_{\mathcal{C}_\theta}(\xi)=1$. Along $\partial\mathcal{C}_\theta$, we choose an orthonormal frame $\{e_i\}_{i=1}^n$ with $e_n=\mu$. Then, Proposition 2.8 in \cite{Mei-Wang-Weng-The_capillary_Minkowski_problem} shows on $\partial\mathcal{C}_\theta$
\begin{equation}\label{Preliminaries-formula-2}
\left\{
\begin{aligned}
&h_{in}=0,\\
&u_{in}=-\cot\theta u_i,\ i=1,\cdots,n-1.
\end{aligned}
\right.
\end{equation}

\section{Capillary Orlicz-Brunn-Minkowski theory}\label{Capillary-Orlicz-Brunn-Minkowski-theory}

In this section, we provide a basic framework of the Orlicz-Brunn-Minkowski theroy for capillary convex bodies. Firstly, we introduce the capillary Orlicz combination of capillary convex bodies. Then, we establish the capillary Orlicz-Brunn-Minkowski inequality and the capillary Orlicz-Minkowski inequality. Finally, we propose the capillary Orlicz-Minkowski problem.

\subsection{Capillary Orlicz combination}

Now, we develop the Orlicz addition of Gardner, Hug and Weil \cite{Gardner-Hug-Weil-The_Orlicz_Brunn_Minkowski_theory} (also see e.g., \cite{Xi-Jin-Leng-The_Orlicz_Brunn_Minkowski_inequality}) to the capillary setting.
\begin{definition}\label{Definition-capillary-Orlicz-combination}
Given $\phi\in\mathcal{O}$ and $\alpha,\beta\geqslant0$ with $\alpha^2+\beta^2>0$. For $\widehat{\Sigma}_1,\widehat{\Sigma}_2\in\mathcal{K}_\theta^\circ$, we define the capillary Orlicz combination $M_\phi(\alpha,\beta;\widehat{\Sigma}_1,\widehat{\Sigma}_2)$ by
\begin{equation*}
h_{M_\phi(\alpha,\beta;\widehat{\Sigma}_1,\widehat{\Sigma}_2)}(\xi)=\inf\Bigg\{t>0\,:\,\alpha\phi\left(\frac{h_{\widehat{\Sigma}_1}
(\xi)}{t}\right)+\beta\phi\left(\frac{h_{\widehat{\Sigma}_2}(\xi)}{t}\right)\leqslant1\Bigg\},\ \xi\in\mathcal{C}_\theta.
\end{equation*}
\end{definition}
If $\phi(x)=x^p$, $p\geqslant1$, then $M_\phi(\alpha,\beta;\widehat{\Sigma}_1,\widehat{\Sigma}_2)$ is the capillary $L_p$-combination $\alpha\cdot\widehat{\Sigma}_1+_p\beta\cdot\widehat{\Sigma}_2$ in \cite[Definition 2.1]{Mei-Wang-Weng-The_capillary_L_p_Minkowski_problem}. Next, we show that the above definition is still well-defined in the capillary setting.

\begin{lemma}
Given $\phi\in\mathcal{O}$ and $\alpha,\beta\geqslant0$ with $\alpha^2+\beta^2>0$. Let $\widehat{\Sigma}_1,\widehat{\Sigma}_2\in\mathcal{K}_\theta^\circ$, then $M_\phi(\alpha,\beta;\widehat{\Sigma}_1,\widehat{\Sigma}_2)\in\mathcal{K}_\theta^\circ$, i.e., $h_{M_\phi(\alpha,\beta;\widehat{\Sigma}_1,\widehat{\Sigma}_2)}$ is indeed the support function of $M_\phi(\alpha,\beta;\widehat{\Sigma}_1,\widehat{\Sigma}_2)$.
\end{lemma}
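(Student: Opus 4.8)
\emph{Strategy.} Write $h:=h_{M_\phi(\alpha,\beta;\widehat\Sigma_1,\widehat\Sigma_2)}$, $h_i:=h_{\widehat\Sigma_i}$, $u:=h/\ell$, $u_i:=h_i/\ell$; since $\widehat\Sigma_i\in\mathcal{K}_\theta^\circ$ the functions $h_i,u_i$ are positive on $\mathcal{C}_\theta$. The plan is to show that $h$ is a positive $C^2$ function on $\mathcal{C}_\theta$ obeying the convexity condition $(h_{ij}+h\delta_{ij})>0$ together with the Robin condition $\nabla_\mu h=\cot\theta\,h$ on $\partial\mathcal{C}_\theta$; by the correspondence recalled in Section \ref{Preliminaries} (cf. \cite{Mei-Wang-Weng-The_capillary_Minkowski_problem}) such an $h$ is the support function of a $C^2$-smooth capillary convex body of positive curvature, which by construction is $M_\phi(\alpha,\beta;\widehat\Sigma_1,\widehat\Sigma_2)$, and positivity of $h$ puts the origin in the interior of its flat boundary, so the body lies in $\mathcal{K}_\theta^\circ$. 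The degenerate cases are immediate: if $\beta=0$ (resp. $\alpha=0$) the infimum in Definition \ref{Definition-capillary-Orlicz-combination} evaluates to $h=\frac{1}{\phi^{-1}(1/\alpha)}\,h_1$ (resp. a positive multiple of $h_2$), a positive multiple of a support function; so from now on assume $\alpha,\beta>0$.

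\emph{Step 1: the defining equation and regularity.} For fixed $\xi$, the function $t\mapsto\alpha\phi(h_1(\xi)/t)+\beta\phi(h_2(\xi)/t)$ is continuous and strictly decreasing (since $\phi$ is strictly increasing), running from $+\infty$ as $t\to0^+$ (as $\phi(x)\to+\infty$ by $A_2$) to $0$ as $t\to+\infty$ (as $\phi(0)=0$); hence the infimum defining $h(\xi)$ is a well-defined positive number and is the unique solution of
\[
\alpha\,\phi\!\left(\frac{h_1}{h}\right)+\beta\,\phi\!\left(\frac{h_2}{h}\right)=1\qquad\text{on }\mathcal{C}_\theta,\qquad\text{equivalently}\qquad\alpha\,\phi\!\left(\frac{u_1}{u}\right)+\beta\,\phi\!\left(\frac{u_2}{u}\right)=1 .
\]
By $A_3$ one has $\phi'(x)\geq\phi(x)/x>0$ for $x>0$, so the left-hand side has nonvanishing derivative in $h$; the implicit function theorem then gives $h\in C^2(\mathcal{C}_\theta)$, and $h$ inherits the smoothness of $h_1,h_2,\phi$.

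\emph{Step 2: the Robin condition.} Differentiating the second form of the equation along the co-normal $\mu$ and using $\nabla_\mu u_i=0$ on $\partial\mathcal{C}_\theta$ from \eqref{Preliminaries-formula-1}, we get
\[
-\,\frac{\nabla_\mu u}{u^{2}}\Big(\alpha\,\phi'(u_1/u)\,u_1+\beta\,\phi'(u_2/u)\,u_2\Big)=0\qquad\text{on }\partial\mathcal{C}_\theta .
\]
Since $\phi'>0$ and $u_i>0$, this forces $\nabla_\mu u=0$ on $\partial\mathcal{C}_\theta$, which is equivalent (writing $h=u\ell$ and using $\nabla_\mu\ell=\cot\theta\,\ell$, the case $\widehat\Sigma=\mathcal{C}_\theta$ of \eqref{Preliminaries-formula-1}) to $\nabla_\mu h=\cot\theta\,h$.

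\emph{Step 3: convexity, the crux.} This is the substantive step, and the one using convexity of $\phi$ together with the positive curvature of $\widehat\Sigma_1,\widehat\Sigma_2$. Put $s_i:=h_i/h$, $A:=\alpha\phi'(s_1)>0$, $B:=\beta\phi'(s_2)>0$, $C:=As_1+Bs_2>0$. Differentiating $\alpha\phi(s_1)+\beta\phi(s_2)=1$ once yields the identity $A(s_1)_k+B(s_2)_k=0$, which also gives $Ch_k=A(h_1)_k+B(h_2)_k$; differentiating once more, eliminating the cross terms with this identity, and adding $Ch\,\delta_{ij}=(Ah_1+Bh_2)\delta_{ij}$, one is led to
\[
C\big(h_{ij}+h\delta_{ij}\big)=A\big((h_1)_{ij}+h_1\delta_{ij}\big)+B\big((h_2)_{ij}+h_2\delta_{ij}\big)+h\Big(\alpha\phi''(s_1)+\frac{A^{2}}{B^{2}}\beta\phi''(s_2)\Big)(s_1)_i(s_1)_j .
\]
On the right, $(h_1)_{ij}+h_1\delta_{ij}>0$ and $(h_2)_{ij}+h_2\delta_{ij}>0$ since $\widehat\Sigma_1,\widehat\Sigma_2$ have positive curvature, and $\phi''\geq0$ makes the last term a nonnegative scalar times the rank-one positive semidefinite matrix $(s_1)_i(s_1)_j$; as $A,B,C>0$ and $h>0$ we conclude $h_{ij}+h\delta_{ij}>0$ on $\mathcal{C}_\theta$, and the lemma follows from the correspondence quoted in the Strategy. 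The only real obstacle is keeping the second-derivative bookkeeping above clean and recognizing that the correction term assembles into a genuinely positive-semidefinite contribution; if one wished to remove the smoothness hypothesis on $\widehat\Sigma_i$, Step 3 would be replaced by a direct proof that $h$ is sublinear in the sense appropriate to $\mathcal{C}_\theta$ (equivalently that the Wulff shape of $h$ has support function $h$), again driven by convexity of $\phi$, in the spirit of Gardner--Hug--Weil \cite{Gardner-Hug-Weil-The_Orlicz_Brunn_Minkowski_theory}.
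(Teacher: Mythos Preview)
Your proof is correct. The Robin-condition argument (Step~2) is essentially identical to the paper's: both differentiate the defining relation $\alpha\phi(h_1/h)+\beta\phi(h_2/h)=1$ along $\mu$, feed in the boundary conditions for $h_1,h_2$ (you do this in the $u$-form, the paper in the $h$-form), and use $\phi'>0$ from $A_3$ to cancel the bracket.

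The genuine difference is Step~3. The paper does not carry out any second-derivative computation; it simply cites Gardner--Hug--Weil and Xi--Jin--Leng to assert that $h$ is spherically convex on $\mathcal{C}_\theta$, and then invokes \cite[Proposition~2.6]{Mei-Wang-Weng-Xia-Alexandrov_Fenchel_inequalities_II} to conclude. Your argument instead gives a direct, self-contained proof that the area operator $h_{ij}+h\delta_{ij}$ is positive definite, via the explicit identity
\[
C\big(h_{ij}+h\delta_{ij}\big)=A\big((h_1)_{ij}+h_1\delta_{ij}\big)+B\big((h_2)_{ij}+h_2\delta_{ij}\big)+h\Big(\alpha\phi''(s_1)+\tfrac{A^{2}}{B^{2}}\beta\phi''(s_2)\Big)(s_1)_i(s_1)_j .
\]
This buys you more: you get the precise expression for $A[h]$ in terms of $A[h_1],A[h_2]$ plus a nonnegative rank-one correction (useful if one later wants curvature estimates for the Orlicz sum), and it makes transparent exactly where convexity of $\phi$ enters. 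The paper's route is shorter and leans on the classical Orlicz theory having already been developed, while yours is more elementary and keeps the argument inside the capillary framework. Your Step~1 also makes the regularity of $h$ explicit via the implicit function theorem, which the paper leaves implicit.
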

\begin{proof}
Since $\phi\in\mathcal{O}$ is strictly increasing on $[0,+\infty)$, we know from \cite{Xi-Jin-Leng-The_Orlicz_Brunn_Minkowski_inequality,Gardner-Hug-Weil-The_Orlicz_Brunn_Minkowski_theory} that $h_{M_\phi(\alpha,\beta;\widehat{\Sigma}_1,\widehat{\Sigma}_2)}$ is spherical convex on $\mathcal{C}_\theta$. Next, we show that $h_{M_\phi(\alpha,\beta;\widehat{\Sigma}_1,\widehat{\Sigma}_2)}$ satisfies the following Robin boundary condition
\begin{equation*}
\nabla_\mu h_{M_\phi(\alpha,\beta;\widehat{\Sigma}_1,\widehat{\Sigma}_2)}=\cot\theta h_{M_\phi(\alpha,\beta;\widehat{\Sigma}_1,\widehat{\Sigma}_2)},\ \mbox{on}\ \partial\mathcal{C}_\theta.
\end{equation*}
Denote $h_{\widehat{\Sigma}_1}=h_1$, $h_{\widehat{\Sigma}_2}=h_2$ and $h_{M_\phi(\alpha,\beta;\widehat{\Sigma}_1,\widehat{\Sigma}_2)}=h$. From Definition \ref{Definition-capillary-Orlicz-combination}, there holds
\begin{equation*}
\alpha\phi\left(\frac{h_1}{h}\right)+\beta\phi\left(\frac{h_2}{h}\right)=1.
\end{equation*}
Taking the differential of the above formula yields
\begin{equation*}
\alpha\phi'\left(\frac{h_1}{h}\right)(\nabla_\mu h_1\,h-h_1\nabla_\mu h)+\beta\phi'\left(\frac{h_2}{h}\right)(\nabla_\mu h_2\,h-h_2\nabla_\mu h)=0.
\end{equation*}
Substituting $\nabla_\mu h_1=\cot\theta h_1$ and $\nabla_\mu h_2=\cot\theta h_2$ into the above formula gives
\begin{equation*}
\Bigg(\alpha h_1\phi'\left(\frac{h_1}{h}\right)+\beta h_2\phi'\left(\frac{h_2}{h}\right)\Bigg)(\cot\theta h-\nabla_\mu h)=0.
\end{equation*}
Using the assumption $A_3$ in \eqref{Assumptions-phi}, we have
\begin{equation*}
\phi'(x)\geqslant\frac{\phi(x)}{x}>0,\ \mbox{for all}\ x>0.
\end{equation*}
Thus, $\alpha h_1\phi'(h_1/h)+\beta h_2\phi'(h_2/h)>0$, which shows $\nabla_\mu h=\cot\theta h$. Finally, by
\cite[Proposition 2.6]{Mei-Wang-Weng-Xia-Alexandrov_Fenchel_inequalities_II}, we know that $h_{M_\phi(\alpha,\beta;\widehat{\Sigma}_1,\widehat{\Sigma}_2)}$ is the support function of capillary convex body $M_\phi(\alpha,\beta;\widehat{\Sigma}_1,\widehat{\Sigma}_2)\in\mathcal{K}_\theta^\circ$.
\end{proof}

\subsection{Capillary Orlicz-Brunn-Minkowski inequality}

The capillary analogues of the Orlicz-Brunn-Minkowski inequality and Orlicz-Minkowski inequality are given in this subsection. For the proof, our main tool is the Aleksandrov-Fenchel inequality for capillary convex hypersurfaces. In some special cases, this inequality was firstly proved by Wang, Weng and Xia \cite{Wang-Weng-Xia-Alexandrov_Fenchel_inequalities_I} by the locally constrained inverse mean curvature flows. Later, there are many important developments concerning this inequality, please see \cite{Hu-Wei-Yang-Zhou-A_complete_family_of_Alexandrov_Fenchel_inequalities,
Mei-Wang-Weng-A_constrained_mean_curvature_flow_and_Alexandrov_Fenchel_inequalities,
Mei-Weng-A_fully_nonlinear_locally_constrained_curvature_flow_for_capillary_hypersurface,
Wang-Weng-Xia-A_Minkowski_type_inequality_for_capillary_hypersurfaces_in_a_half_space}. Finally, by the spectral methods of elliptic differential operators of Shenfeld and van Handel \cite{Shenfeld-van-Handel-Mixed_volumes_and_the_Bochner_method,Shenfeld-van-Handel-The_extremals_of_Minkowski_quadratic_inequality,
Shenfeld-van-Handel-The_extremals_of_the_Alexandrov_Fenchel_inequality_for_convex_polytopes}, Mei, Wang, Weng and Xia \cite{Mei-Wang-Weng-Xia-Alexandrov_Fenchel_inequalities_II} established the Aleksandrov-Fenchel inequality for all $\theta\in(0,\pi)$.

Denote by $\delta_{\mathbb{S}^n}$ the standard round metric on the unit sphere $\mathbb{S}^n$, and by $\nabla$ the Levi-Civita connection of $\delta_{\mathbb{S}^n}$. For $f\in C^2(\mathcal{C}_\theta)$, the area operator is defined by
\begin{equation*}
A[f]=\nabla^2f+f\delta_{\mathbb{S}^n}.
\end{equation*}
Recall that the mixed discriminant $Q:(\mathbb{R}^{n\times n})^n\to\mathbb{R}$ is defined by
\begin{equation*}
\det(\lambda_1A_1+\cdots+\lambda_mA_m)=\sum_{i_1,\cdots,i_n=1}^m\lambda_{i_1}\cdots\lambda_{i_n}Q(A_{i_1},\cdots,A_{i_n})
\end{equation*}
for $m\in\mathbb{N}$, $\lambda_1,\cdots,\lambda_m\geqslant0$ and the symmetric matrices $A_1,\cdots,A_m\in\mathbb{R}^{n\times n}$. Then, for $f,f_1,\cdots,f_n\in C^2(\mathcal{C}_\theta)$, the authors in \cite{Mei-Wang-Weng-Xia-Alexandrov_Fenchel_inequalities_II} introduced the mixed volume of $f,f_1,\cdots,f_n$ as follows
\begin{equation*}
V(f,f_1,\cdots,f_n)=\frac{1}{n+1}\int_{\mathcal{C}_\theta}fQ(A[f_1],\cdots,A[f_n])\,d\xi.
\end{equation*}
And the mixed volume of $\widehat{\Sigma},\widehat{\Sigma}_1,\cdots,\widehat{\Sigma}_n\in\mathcal{K}_\theta$ is defined by
\begin{equation*}
V(\widehat{\Sigma},\widehat{\Sigma}_1,\cdots,\widehat{\Sigma}_n)=V(h_{\widehat{\Sigma}},h_{\widehat{\Sigma}_1},\cdots,
h_{\widehat{\Sigma}_n}).
\end{equation*}
In particular, we denote
\begin{equation*}
V_1(\widehat{\Sigma}_1,\widehat{\Sigma}_2)=V(\overbrace{\widehat{\Sigma}_1,\cdots,\widehat{\Sigma}_1}^n,\widehat{\Sigma}_2),\
V_1(\widehat{\Sigma}_1,f)=V(\overbrace{h_{\widehat{\Sigma}_1},\cdots,h_{\widehat{\Sigma}_n}}^n,f).
\end{equation*}

\begin{theorem}[see \cite{Mei-Wang-Weng-Xia-Alexandrov_Fenchel_inequalities_II}]
Let $\theta\in(0,\pi)$ and $\widehat{\Sigma},\widehat{\Sigma}_1,\cdots,\widehat{\Sigma}_n\in\mathcal{K}_\theta$, there holds the Aleksandrov-Fenchel inequality
\begin{equation}\label{The-capillary-Aleksandrov-Fenchel-inequality}
V^2(\widehat{\Sigma},\widehat{\Sigma}_1,\widehat{\Sigma}_2,\cdots,\widehat{\Sigma}_n)\geqslant
V(\widehat{\Sigma},\widehat{\Sigma},\widehat{\Sigma}_2,\cdots,\widehat{\Sigma}_n)
V(\widehat{\Sigma}_1,\widehat{\Sigma}_1,\widehat{\Sigma}_2,\cdots,\widehat{\Sigma}_n),
\end{equation}
with equality if and only if $\widehat{\Sigma}$ and $\widehat{\Sigma}_1$ are horizontally homothetic.
\end{theorem}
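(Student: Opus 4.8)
The target inequality is a reverse Cauchy--Schwarz statement, so the plan is to realize it as a consequence of the Lorentzian signature of a symmetric bilinear form. Fix smooth $\widehat{\Sigma}_2,\dots,\widehat{\Sigma}_n\in\mathcal{K}_\theta$ with positive curvature, write $h_i=h_{\widehat{\Sigma}_i}$, and let $\mathcal{H}$ be the space of $f\in C^2(\mathcal{C}_\theta)$ satisfying the linearized capillary (Robin) boundary condition $\nabla_\mu f=\cot\theta\,f$ on $\partial\mathcal{C}_\theta$; equivalently, by the second identity in \eqref{Preliminaries-formula-1}, $\nabla_\mu(f/\ell)=0$. On $\mathcal{H}$ define
\[
B(f,g)=V(f,g,\widehat{\Sigma}_2,\dots,\widehat{\Sigma}_n)=\frac{1}{n+1}\int_{\mathcal{C}_\theta}f\,Q\big(A[g],A[h_2],\dots,A[h_n]\big)\,d\xi .
\]
First I would show $B$ is symmetric: expanding $Q$ and integrating by parts twice, the would-be non-symmetric terms are boundary integrals over $\partial\mathcal{C}_\theta$, and these vanish precisely because $f$, $g$ and the $h_i$ obey the boundary relations \eqref{Preliminaries-formula-1}--\eqref{Preliminaries-formula-2} (the Robin condition plays here the role that the absence of boundary plays for classical mixed volumes). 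The assertion to be proved is then exactly $B(h_{\widehat{\Sigma}},h_{\widehat{\Sigma}_1})^2\ge B(h_{\widehat{\Sigma}},h_{\widehat{\Sigma}})\,B(h_{\widehat{\Sigma}_1},h_{\widehat{\Sigma}_1})$.

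Next I would encode $B$ by a second-order operator. With $\mathrm{cof}$ denoting the cofactor matrix coming from the mixed-discriminant linearization of $A[h_2],\dots,A[h_n]$, there is a divergence-form, self-adjoint elliptic operator $\mathcal{L}$ on $\mathcal{C}_\theta$, whose natural boundary condition is the capillary one, such that $B(f,g)=-\int_{\mathcal{C}_\theta} g\,\mathcal{L}f\,d\xi$; ellipticity holds because that cofactor matrix is positive definite when $\widehat{\Sigma}_2,\dots,\widehat{\Sigma}_n$ are smooth and strictly convex. The whole inequality reduces to: $\mathcal{L}$ has \emph{exactly one} positive eigenvalue (with multiplicity), and $B$ is negative semidefinite on the orthogonal complement of the top eigenfunction. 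I would prove this spectral fact by the Bochner/spectral method of Shenfeld--van Handel: establish it first in the model case $\widehat{\Sigma}_2=\dots=\widehat{\Sigma}_n=\mathcal{C}_\theta$, where $\mathcal{L}$ is (a constant multiple of) the Jacobi-type operator $\Delta_{\mathbb{S}^n}+n$ on the cap with the condition $\nabla_\mu(f/\ell)=0$, whose spectrum can be read off explicitly — the top eigenvalue is simple with positive eigenfunction $\ell$, the next eigenvalue is $0$ with eigenspace spanned by the $n$ functions coming from horizontal translations, and every lower eigenvalue is negative — and then propagate to general smooth $\widehat{\Sigma}_i$ along a path of convex bodies, using interior and boundary elliptic a priori estimates together with the Robin condition to prevent any eigenvalue other than the guaranteed simple one from crossing $0$; a Bochner identity on $\mathcal{C}_\theta$, whose boundary terms are again annihilated by $\nabla_\mu(f/\ell)=0$ and have a definite sign for $\theta\in(0,\pi)$, supplies what is needed to keep the count.

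For the equality characterization, the reverse Cauchy--Schwarz inequality attached to a form of signature $(+,-,\dots,-)$ is an equality exactly when the two vectors are linearly dependent modulo the null space, i.e. $h_{\widehat{\Sigma}}=\lambda\,h_{\widehat{\Sigma}_1}+w$ with $\lambda\ge 0$ and $w\in\ker\mathcal{L}$. So it remains to identify $\ker\mathcal{L}$: a strong maximum principle together with the Bochner identity forces a kernel element to be "linear" in the appropriate sense, and among such functions only those generated by horizontal translations $x\in\mathrm{span}\{E_1,\dots,E_n\}$ survive the boundary condition $\nabla_\mu(f/\ell)=0$ (the vertical translation direction is killed). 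Translating back through the support-function dictionary, $h_{\widehat{\Sigma}}=\lambda\,h_{\widehat{\Sigma}_1}+\langle\,\cdot\,,x\rangle$ for such an $x$, which is precisely $\widehat{\Sigma}=\lambda\widehat{\Sigma}_1+x$, i.e. the two bodies are horizontally homothetic.

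I expect the main obstacle to be the signature step: adapting the Shenfeld--van Handel machinery — self-adjointness and ellipticity of $\mathcal{L}$, the deformation/continuity argument forbidding spurious zero eigenvalues, and the Bochner identity — to a domain with boundary, arranging that every integration by parts produces boundary terms annihilated by the capillary condition, and verifying that the kernel is exactly $n$-dimensional rather than $(n+1)$-dimensional.
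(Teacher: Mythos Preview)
The paper does not supply its own proof of this theorem: it is quoted verbatim from \cite{Mei-Wang-Weng-Xia-Alexandrov_Fenchel_inequalities_II} and used as a black box. The surrounding discussion in Section~\ref{Capillary-Orlicz-Brunn-Minkowski-theory} does, however, indicate the method of that reference---``the spectral methods of elliptic differential operators of Shenfeld and van Handel''---and your outline is precisely that strategy (Lorentzian signature of the bilinear form $B$, self-adjoint elliptic operator $\mathcal{L}$ with Robin boundary data, simplicity of the top eigenvalue via a Bochner-type identity and a deformation argument, identification of $\ker\mathcal{L}$ with horizontal translations). So your proposal is aligned with the approach the paper attributes to the cited source; there is nothing in the present paper to compare against beyond that attribution.
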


Given $\phi\in\mathcal{O}$ and $\varepsilon>0$, we denote the Orlicz perturbation $\widehat{\Sigma}_\varepsilon=\widehat{\Sigma}_1+_\phi\varepsilon\widehat{\Sigma}_2:=M_\phi(1,\varepsilon;\widehat{\Sigma}_1,\widehat{\Sigma}_2)$ for $\widehat{\Sigma}_1,\widehat{\Sigma}_2\in\mathcal{K}_\theta^\circ$. By
\cite[Lemma 8.4]{Gardner-Hug-Weil-The_Orlicz_Brunn_Minkowski_theory}, there holds
\begin{equation*}
\frac{d}{d\varepsilon}\bigg|_{\varepsilon=0}h_{\widehat{\Sigma}_\varepsilon}=\frac{1}{\phi'(1)}h_{\widehat{\Sigma}_1}
\phi\left(\frac{h_{\widehat{\Sigma}_2}}{h_{\widehat{\Sigma}_1}}\right).
\end{equation*}
Then, we have
\begin{equation}\label{Variational-Formula}
\frac{d}{d\varepsilon}\bigg|_{\varepsilon=0}V(\widehat{\Sigma}_\varepsilon)=\frac{1}{\phi'(1)}\int_{\mathcal{C}_\theta}
\phi\left(\frac{h_{\widehat{\Sigma}_2}}{h_{\widehat{\Sigma}_1}}\right)h_{\widehat{\Sigma}_1}
\det((h_{\widehat{\Sigma}_1})_{ij}+h_{\widehat{\Sigma}_1}\delta_{ij})\,d\xi.
\end{equation}
Therefore, we define the Orlicz mixed volume of capillary convex bodies as follows.
\begin{definition}
Let $\phi\in\mathcal{O}$ and $\widehat{\Sigma}_1,\widehat{\Sigma}_2\in\mathcal{K}_\theta^\circ$. The Orlicz mixed volume of $\widehat{\Sigma}_1$ and $\widehat{\Sigma}_2$ is defined by
\begin{equation*}
V_\phi(\widehat{\Sigma}_1,\widehat{\Sigma}_2)=\frac{1}{n+1}\int_{\mathcal{C}_\theta}\phi\bigg(\frac{h_{\widehat{\Sigma}_2}}
{h_{\widehat{\Sigma}_1}}\bigg)h_{\widehat{\Sigma}_1}\det((h_{\widehat{\Sigma}_1})_{ij}+h_{\widehat{\Sigma}_1}\delta_{ij})\,d\xi.
\end{equation*}
\end{definition}
\begin{remark}
If we choose $\phi(x)=x^p$, then $V_\phi(\widehat{\Sigma}_1,\ell^\frac{1}{p}h_{\widehat{\Sigma}_2})=V^c_p(\widehat{\Sigma}_1,\widehat{\Sigma}_2)$. Here, the capillary $L_p$ mixed volumes \cite{Mei-Wang-Weng-The_capillary_L_p_Minkowski_problem} are defined by
\begin{equation*}
V^c_p(\widehat{\Sigma}_1,\widehat{\Sigma}_2)=\frac{1}{n+1}\int_{\mathcal{C}_\theta}\ell h^p_{\widehat{\Sigma}_2}
h^{1-p}_{\widehat{\Sigma}_1}\det((h_{\widehat{\Sigma}_1})_{ij}+h_{\widehat{\Sigma}_1}\delta_{ij})\,d\xi.
\end{equation*}
\end{remark}

For convenience, we denote the capillary cone-volume measure
\begin{equation*}
dV^c(\widehat{\Sigma},\cdot)=\frac{1}{n+1}\ell\,h_{\widehat{\Sigma}}\det((h_{\widehat{\Sigma}})_{ij}+h_{\widehat{\Sigma}}\delta_{ij})\,d\xi,
\end{equation*}
and the capillary Orlicz volume
\begin{equation*}
V_\phi(\widehat{\Sigma})=V_\phi(\widehat{\Sigma},\widehat{\Sigma}).
\end{equation*}
From the Aleksandrov-Fenchel inequality \eqref{The-capillary-Aleksandrov-Fenchel-inequality} and the Jensen inequality, we can establish the following Orlicz-Minkowski inequality.
\begin{theorem}
Let $\phi\in\mathcal{O}$ and $\widehat{\Sigma}_1,\widehat{\Sigma}_2\in\mathcal{K}_\theta^\circ$. There holds the Orlicz-Minkowski inequality
\begin{equation}\label{Orlicz-Minkowski-inequality}
V_\phi(\widehat{\Sigma}_1,\widehat{\Sigma}_2)\geqslant V(\widehat{\Sigma}_1)\,\phi\left(\frac{V(\widehat{\Sigma}_2)^\frac{1}{n+1}}{V(\widehat{\Sigma}_1)^\frac{1}{n+1}}\right),
\end{equation}
with equality if $\widehat{\Sigma}_1$ and $\widehat{\Sigma}_2$ are dilates. When $\phi$ is strictly convex, equality holds if and only if $\widehat{\Sigma}_1$ and $\widehat{\Sigma}_2$ are dilates.
\end{theorem}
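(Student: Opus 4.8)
The plan is to mimic the classical derivation of the Orlicz-Minkowski inequality from the first-order mixed-volume inequality, here using the capillary Aleksandrov-Fenchel inequality \eqref{The-capillary-Aleksandrov-Fenchel-inequality} together with Jensen's inequality for the convex function $\phi$. First I would observe that iterating \eqref{The-capillary-Aleksandrov-Fenchel-inequality} with $\widehat{\Sigma}=\widehat{\Sigma}_2$, $\widehat{\Sigma}_1=\widehat{\Sigma}_2=\cdots=\widehat{\Sigma}_n=\widehat{\Sigma}_1$ yields the capillary Minkowski first inequality in the usual form
\begin{equation*}
V_1(\widehat{\Sigma}_1,\widehat{\Sigma}_2)^{n+1}\geqslant V(\widehat{\Sigma}_1)^n\,V(\widehat{\Sigma}_2),
\end{equation*}
with equality if and only if $\widehat{\Sigma}_1$ and $\widehat{\Sigma}_2$ are horizontally homothetic; equivalently, writing $dV^c(\widehat{\Sigma}_1,\cdot)$ for the capillary cone-volume measure, this says
\begin{equation*}
\frac{1}{V(\widehat{\Sigma}_1)}\int_{\mathcal{C}_\theta}\frac{h_{\widehat{\Sigma}_2}}{h_{\widehat{\Sigma}_1}}\,\frac{dV^c(\widehat{\Sigma}_1,\cdot)}{\ell}\ \cdot\ \frac{1}{V(\widehat{\Sigma}_1)}\ \geqslant\ \left(\frac{V(\widehat{\Sigma}_2)}{V(\widehat{\Sigma}_1)}\right)^{\frac{1}{n+1}},
\end{equation*}
after normalizing the measure $\tfrac{1}{(n+1)V(\widehat{\Sigma}_1)}h_{\widehat{\Sigma}_1}\det((h_{\widehat{\Sigma}_1})_{ij}+h_{\widehat{\Sigma}_1}\delta_{ij})\,d\xi$ to be a probability measure on $\mathcal{C}_\theta$.

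Next I would apply Jensen's inequality. Since $\phi$ is convex on $[0,\infty)$ and the measure $d\mu:=\tfrac{1}{(n+1)V(\widehat{\Sigma}_1)}h_{\widehat{\Sigma}_1}\det((h_{\widehat{\Sigma}_1})_{ij}+h_{\widehat{\Sigma}_1}\delta_{ij})\,d\xi$ is a probability measure, we get
\begin{equation*}
\frac{V_\phi(\widehat{\Sigma}_1,\widehat{\Sigma}_2)}{V(\widehat{\Sigma}_1)}=\int_{\mathcal{C}_\theta}\phi\!\left(\frac{h_{\widehat{\Sigma}_2}}{h_{\widehat{\Sigma}_1}}\right)d\mu\ \geqslant\ \phi\!\left(\int_{\mathcal{C}_\theta}\frac{h_{\widehat{\Sigma}_2}}{h_{\widehat{\Sigma}_1}}\,d\mu\right).
\end{equation*}
Because $\phi$ is increasing, I can then plug the lower bound $\int \tfrac{h_{\widehat{\Sigma}_2}}{h_{\widehat{\Sigma}_1}}\,d\mu = \tfrac{V_1(\widehat{\Sigma}_1,\widehat{\Sigma}_2)}{V(\widehat{\Sigma}_1)}\geqslant \bigl(V(\widehat{\Sigma}_2)/V(\widehat{\Sigma}_1)\bigr)^{1/(n+1)}$ from the previous step into the right-hand side, obtaining exactly \eqref{Orlicz-Minkowski-inequality}. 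The case of dilates is an immediate check: if $\widehat{\Sigma}_2=\lambda\widehat{\Sigma}_1$ then $h_{\widehat{\Sigma}_2}/h_{\widehat{\Sigma}_1}\equiv\lambda$, both inequalities become equalities, and $V(\widehat{\Sigma}_2)=\lambda^{n+1}V(\widehat{\Sigma}_1)$ makes the two sides of \eqref{Orlicz-Minkowski-inequality} coincide.

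For the equality characterization when $\phi$ is strictly convex, I would trace back through the two inequalities. Equality in Jensen's inequality with a strictly convex $\phi$ forces $h_{\widehat{\Sigma}_2}/h_{\widehat{\Sigma}_1}$ to be constant $d\mu$-a.e.; since the density of $d\mu$ is positive and continuous on $\mathcal{C}_\theta$ (the support function of a $C^2$ capillary convex body with positive curvature), this means $h_{\widehat{\Sigma}_2}=\lambda h_{\widehat{\Sigma}_1}$ identically for some $\lambda>0$, i.e., $\widehat{\Sigma}_1$ and $\widehat{\Sigma}_2$ are dilates. (Conversely dilates give equality, as noted.) The one subtlety to handle carefully — and the main obstacle — is the reduction of the capillary Aleksandrov-Fenchel inequality to the Minkowski first inequality in the precise normalized form above: one must confirm that the mixed volume $V_1(\widehat{\Sigma}_1,\widehat{\Sigma}_2)$ coincides with $\tfrac{1}{n+1}\int_{\mathcal{C}_\theta}h_{\widehat{\Sigma}_2}\det((h_{\widehat{\Sigma}_1})_{ij}+h_{\widehat{\Sigma}_1}\delta_{ij})\,d\xi$ (an integration-by-parts identity on $\mathcal{C}_\theta$, using the Robin boundary condition \eqref{Preliminaries-formula-1} and the boundary relations \eqref{Preliminaries-formula-2} so that the boundary terms vanish), and that $V(\widehat{\Sigma}_i)=\tfrac{1}{n+1}\int_{\mathcal{C}_\theta}h_{\widehat{\Sigma}_i}\det((h_{\widehat{\Sigma}_i})_{ij}+h_{\widehat{\Sigma}_i}\delta_{ij})\,d\xi$. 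These identities are essentially recorded in \cite{Mei-Wang-Weng-Xia-Alexandrov_Fenchel_inequalities_II} and in Section \ref{Preliminaries}; once they are in place the argument is the standard Jensen-plus-Minkowski chain, and the fact that $\theta\in(0,\pi)$ is allowed here comes for free since \eqref{The-capillary-Aleksandrov-Fenchel-inequality} holds in that full range.
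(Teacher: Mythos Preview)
Your proposal is correct and follows essentially the same approach as the paper: apply Jensen's inequality to the probability measure $\tfrac{1}{(n+1)V(\widehat{\Sigma}_1)}h_{\widehat{\Sigma}_1}\det((h_{\widehat{\Sigma}_1})_{ij}+h_{\widehat{\Sigma}_1}\delta_{ij})\,d\xi$, then feed in the capillary Minkowski first inequality $V_1(\widehat{\Sigma}_1,\widehat{\Sigma}_2)^{n+1}\geqslant V(\widehat{\Sigma}_1)^nV(\widehat{\Sigma}_2)$ obtained from \eqref{The-capillary-Aleksandrov-Fenchel-inequality} via the monotonicity of $\phi$, with the equality case read off from strict convexity in Jensen. (Note a stray factor $\tfrac{1}{V(\widehat{\Sigma}_1)}$ in your first displayed normalization; the correct statement, which you use correctly afterwards, is $\int \tfrac{h_{\widehat{\Sigma}_2}}{h_{\widehat{\Sigma}_1}}\,d\mu=\tfrac{V_1(\widehat{\Sigma}_1,\widehat{\Sigma}_2)}{V(\widehat{\Sigma}_1)}\geqslant (V(\widehat{\Sigma}_2)/V(\widehat{\Sigma}_1))^{1/(n+1)}$.)
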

\begin{proof}
Note that
\begin{equation*}
\int_{\mathcal{C}_\theta}\frac{1}{\ell(\xi)}\,dV^c(\widehat{\Sigma}_1,\xi)=V(\widehat{\Sigma}_1).
\end{equation*}
By the Jensen inequality, we have
\begin{equation*}
V_\phi(\widehat{\Sigma}_1,\widehat{\Sigma}_2)\geqslant V(\widehat{\Sigma}_1)\phi\bigg(\frac{1}{V(\widehat{\Sigma}_1)}
\int_{\mathcal{C}_\theta}\frac{1}{\ell(\xi)}\frac{h_{\widehat{\Sigma}_2}(\xi)}{h_{\widehat{\Sigma}_1}(\xi)}\,dV^c(\widehat{\Sigma}_1,\xi)
\bigg)=V(\widehat{\Sigma}_1)\phi\bigg(\frac{V_1(\widehat{\Sigma}_1,\widehat{\Sigma}_2)}{V(\widehat{\Sigma}_1)}\bigg).
\end{equation*}
From the Aleksandrov-Fenchel inequality \eqref{The-capillary-Aleksandrov-Fenchel-inequality}, we can derive the following Minkowski inequality (see \cite{Schneider-book})
\begin{equation*}
V_1(\widehat{\Sigma}_1,\widehat{\Sigma}_2)^{n+1}\geqslant V(\widehat{\Sigma}_1)^nV(\widehat{\Sigma}_2),
\end{equation*}
where equality holds if and only if $\widehat{\Sigma}_1$ and $\widehat{\Sigma}_2$ are horizontally homothetic. Then, we obtain
\begin{equation*}
\frac{V_1(\widehat{\Sigma}_1,\widehat{\Sigma}_2)}{V(\widehat{\Sigma}_1)}\geqslant\frac{V(\widehat{\Sigma}_2)^\frac{1}{n+1}}
{V(\widehat{\Sigma}_1)^\frac{1}{n+1}}.
\end{equation*}
Using the monotonicity of $\phi$, the desired inequality is obtained. Regarding the equality condition, it is easy to check from the Aleksandrov-Fenchel inequality and the Jensen inequality.
\end{proof}

Using the above Orlicz-Minkowski inequality, we can establish the following Orlicz-Brunn-Minkowski inequality.
\begin{theorem}
Let $\phi\in\mathcal{O}$, $\alpha,\beta\geqslant0$, $\alpha^2+\beta^2>0$, $\widehat{\Sigma}_1,\widehat{\Sigma}_2\in\mathcal{K}_\theta^\circ$. There holds the Orlicz Brunn-Minkowski inequality
\begin{equation}\label{Orlicz-Brunn-Minkowski-inequality}
\alpha\phi\left(\frac{V(\widehat{\Sigma}_1)^\frac{1}{n+1}}{V(M_\phi(\alpha,\beta;\widehat{\Sigma}_1,\widehat{\Sigma}_2))^\frac{1}{n+1}}\right)
+\beta\phi\left(\frac{V(\widehat{\Sigma}_2)^\frac{1}{n+1}}{V(M_\phi(\alpha,\beta;\widehat{\Sigma}_1,\widehat{\Sigma}_2))^\frac{1}{n+1}}\right)
\leqslant1,
\end{equation}
with equality if $\widehat{\Sigma}_1$ and $\widehat{\Sigma}_2$ are dilates. When $\phi$ is strictly convex, equality holds if and only if $\widehat{\Sigma}_1$ and $\widehat{\Sigma}_2$ are dilates.
\end{theorem}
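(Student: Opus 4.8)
The plan is to adapt the Gardner--Hug--Weil argument: inequality \eqref{Orlicz-Brunn-Minkowski-inequality} should drop out formally from the Orlicz--Minkowski inequality \eqref{Orlicz-Minkowski-inequality} once one has a ``fundamental identity'' expressing the volume of the capillary Orlicz combination through Orlicz mixed volumes. Write $\widehat{\Sigma}:=M_\phi(\alpha,\beta;\widehat{\Sigma}_1,\widehat{\Sigma}_2)$, $h_1:=h_{\widehat{\Sigma}_1}$, $h_2:=h_{\widehat{\Sigma}_2}$, $h:=h_{\widehat{\Sigma}}$.

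First I would record the pointwise defining equation. Since $\widehat{\Sigma}_1,\widehat{\Sigma}_2\in\mathcal{K}_\theta^\circ$, the support functions $h_1,h_2$ are positive on $\mathcal{C}_\theta$, and as already noted in the proof that $M_\phi(\alpha,\beta;\widehat{\Sigma}_1,\widehat{\Sigma}_2)\in\mathcal{K}_\theta^\circ$, the continuity and strict monotonicity of $\phi$ together with $\phi(0)=0$ force the infimum in Definition \ref{Definition-capillary-Orlicz-combination} to be attained, so that
\[
\alpha\,\phi\!\left(\frac{h_1}{h}\right)+\beta\,\phi\!\left(\frac{h_2}{h}\right)=1 \qquad\text{on }\mathcal{C}_\theta.
\]
Next I would integrate this identity against the measure $\ell^{-1}\,dV^c(\widehat{\Sigma},\cdot)$. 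Using $\int_{\mathcal{C}_\theta}\ell^{-1}\,dV^c(\widehat{\Sigma},\cdot)=V(\widehat{\Sigma})$ (the normalization already used to prove \eqref{Orlicz-Minkowski-inequality}) and the definition of $V_\phi$, this yields the key identity
\[
\alpha\,V_\phi(\widehat{\Sigma},\widehat{\Sigma}_1)+\beta\,V_\phi(\widehat{\Sigma},\widehat{\Sigma}_2)=V(\widehat{\Sigma}).
\]
Then I would apply \eqref{Orlicz-Minkowski-inequality} to each summand, namely $V_\phi(\widehat{\Sigma},\widehat{\Sigma}_i)\geqslant V(\widehat{\Sigma})\,\phi\big(V(\widehat{\Sigma}_i)^{1/(n+1)}/V(\widehat{\Sigma})^{1/(n+1)}\big)$, substitute into the identity, and divide by $V(\widehat{\Sigma})>0$; this is exactly \eqref{Orlicz-Brunn-Minkowski-inequality}, and note that no monotonicity of $\phi$ is needed in this last step.

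For the equality analysis, if $\widehat{\Sigma}_1$ and $\widehat{\Sigma}_2$ are dilates, say $h_2=\lambda h_1$, then $h=c\,h_1$ with $c$ the constant determined by $\alpha\phi(1/c)+\beta\phi(\lambda/c)=1$, and since $V(\widehat{\Sigma})=c^{n+1}V(\widehat{\Sigma}_1)$ and $V(\widehat{\Sigma}_2)=\lambda^{n+1}V(\widehat{\Sigma}_1)$, a direct substitution turns the left-hand side of \eqref{Orlicz-Brunn-Minkowski-inequality} into $\alpha\phi(1/c)+\beta\phi(\lambda/c)=1$, giving equality. Conversely, when $\phi$ is strictly convex we may assume $\alpha,\beta>0$ (the cases $\alpha=0$ or $\beta=0$ being trivial, $\widehat{\Sigma}$ then being a dilate of one body); equality in \eqref{Orlicz-Brunn-Minkowski-inequality} forces equality in both applications of \eqref{Orlicz-Minkowski-inequality}, whence $\widehat{\Sigma}$ is a dilate of $\widehat{\Sigma}_1$ and a dilate of $\widehat{\Sigma}_2$, so $\widehat{\Sigma}_1$ and $\widehat{\Sigma}_2$ are dilates.

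There is no genuine obstacle here beyond two points that must be handled cleanly: verifying that the defining equation of $M_\phi$ holds with \emph{equality} pointwise (which is where positivity of $h_1,h_2$ and the structural hypotheses on $\phi\in\mathcal{O}$ enter) and confirming that $\ell^{-1}\,dV^c(\widehat{\Sigma},\cdot)$ integrates to $V(\widehat{\Sigma})$. Once these are in place the inequality is a formal consequence of the Orlicz--Minkowski inequality, and all the analytic weight of the proof has already been absorbed into the capillary Aleksandrov--Fenchel inequality \eqref{The-capillary-Aleksandrov-Fenchel-inequality} and the Jensen-inequality step behind \eqref{Orlicz-Minkowski-inequality}.
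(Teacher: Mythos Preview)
Your proof is correct and follows essentially the same approach as the paper: both integrate the defining equation $\alpha\phi(h_1/h)+\beta\phi(h_2/h)=1$ against the weighted cone-volume measure of $\widehat{\Sigma}=M_\phi(\alpha,\beta;\widehat{\Sigma}_1,\widehat{\Sigma}_2)$ to obtain the identity $\alpha V_\phi(\widehat{\Sigma},\widehat{\Sigma}_1)+\beta V_\phi(\widehat{\Sigma},\widehat{\Sigma}_2)=V(\widehat{\Sigma})$, and then apply the Orlicz--Minkowski inequality \eqref{Orlicz-Minkowski-inequality} to each summand. Your write-up is in fact slightly cleaner (it avoids the superfluous $\phi(1)$ factor the paper carries through via $V_\phi(\widehat{\Sigma})=\phi(1)V(\widehat{\Sigma})$) and supplies the equality-case details that the paper omits.
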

\begin{proof}
We denote $\widehat{\Sigma}=M_\phi(\alpha,\beta;\widehat{\Sigma}_1,\widehat{\Sigma}_2)$, and denote by $h,h_1,h_2$ the support functions of $\widehat{\Sigma},\widehat{\Sigma}_1,\widehat{\Sigma}_2$, respectively. Using Proposition \ref{Orlicz-Minkowski-inequality}, we have
\begin{equation*}
\begin{aligned}
V_\phi(\widehat{\Sigma})&=V_\phi(\widehat{\Sigma},\widehat{\Sigma})\\
&=\phi(1)\int_{\mathcal{C}_\theta}\frac{1}{\ell(\xi)}\left[\alpha\phi\left(\frac{h_1}{h}\right)+\beta\phi\left(\frac{h_2}{h}\right)
\right]\,dV^c(\widehat{\Sigma},\xi)\\
&=\phi(1)\left(\alpha V_\phi(\widehat{\Sigma},\widehat{\Sigma}_1)+\beta V_\phi(\widehat{\Sigma},\widehat{\Sigma}_2)\right)\\
&\geqslant\phi(1)\left(\alpha V(\widehat{\Sigma})\,\phi\left(\frac{V(\widehat{\Sigma}_1)^\frac{1}{n+1}}{V(\widehat{\Sigma})^\frac{1}{n+1}}\right)+\beta V(\widehat{\Sigma})\,\phi\left(\frac{V(\widehat{\Sigma}_2)^\frac{1}{n+1}}{V(\widehat{\Sigma})^\frac{1}{n+1}}\right)\right)\\
&=V_\phi(\widehat{\Sigma})\left(\alpha\phi\left(\frac{V(\widehat{\Sigma}_1)^\frac{1}{n+1}}{V(\widehat{\Sigma})^\frac{1}{n+1}}\right)
+\beta\phi\left(\frac{V(\widehat{\Sigma}_2)^\frac{1}{n+1}}{V(\widehat{\Sigma})^\frac{1}{n+1}}\right)\right),
\end{aligned}
\end{equation*}
this completes the proof of the theorem.
\end{proof}

\begin{remark}
From the proof of the above theorems, we conclude that if we remove conditions $A_1$ and $A_2$ in \eqref{Assumptions-phi}, then the above Orlicz-Minkowski inequality and Orlicz-Brunn-Minkowski inequality are still hold for all $\theta\in(0,\pi)$.
\end{remark}

We now derive the equivalence between the Orlicz-Minkowski inequality \eqref{Orlicz-Minkowski-inequality} and the Orlicz-Brunn-Minkowski inequality \eqref{Orlicz-Brunn-Minkowski-inequality}. We have proved the Orlicz-Brunn-Minkowski inequality \eqref{Orlicz-Brunn-Minkowski-inequality} by the Orlicz-Minkowski inequality \eqref{Orlicz-Minkowski-inequality}. Thus, we only need to prove the Orlicz-Minkowski inequality \eqref{Orlicz-Minkowski-inequality} by the Orlicz-Brunn-Minkowski inequality \eqref{Orlicz-Brunn-Minkowski-inequality}.
\begin{proof}[Proof (\eqref{Orlicz-Brunn-Minkowski-inequality}$\Rightarrow$ \eqref{Orlicz-Minkowski-inequality})]
Given $\phi\in\mathcal{O}$ with $\phi(1)=1$. For $\widehat{\Sigma}_1,\widehat{\Sigma}_2\in\mathcal{K}_\theta^\circ$, we denote \begin{equation*}
\widehat{\Sigma}_\varepsilon=\widehat{\Sigma}_1+_\phi\varepsilon\widehat{\Sigma}_2,
\end{equation*}
for $\varepsilon>0$. Define the function
\begin{equation*}
f(\varepsilon)=\phi\left(\frac{V(\widehat{\Sigma}_1)^\frac{1}{n+1}}{V(\widehat{\Sigma}_\varepsilon)^\frac{1}{n+1}}\right)
+\varepsilon\phi\left(\frac{V(\widehat{\Sigma}_2)^\frac{1}{n+1}}{V(\widehat{\Sigma}_\varepsilon)^\frac{1}{n+1}}\right)-1,\ \varepsilon>0.
\end{equation*}
By the Orlicz-Brunn-Minkowski inequality \eqref{Orlicz-Brunn-Minkowski-inequality}, we know that $f$ is non-positive and convex on $(0,+\infty)$. Thus, we have
\begin{equation*}
\begin{aligned}
0&\geqslant\frac{d}{d\varepsilon}\bigg|_{\varepsilon=0^+}f(\varepsilon)=\lim_{\varepsilon\to0^+}\frac{f(\varepsilon)-f(0)}{\varepsilon}\\
&=\lim_{\varepsilon\to0^+}\frac{\phi\left(\frac{V(\widehat{\Sigma}_1)^\frac{1}{n+1}}{V(\widehat{\Sigma}_\varepsilon)^\frac{1}{n+1}}
\right)-1}{\varepsilon}+\lim_{\varepsilon\to0^+}\phi\left(\frac{V(\widehat{\Sigma}_2)^\frac{1}{n+1}}
{V(\widehat{\Sigma}_\varepsilon)^\frac{1}{n+1}}\right)\\
&=-\frac{1}{n+1}\phi'(1)V(\widehat{\Sigma}_1)^{-1}\frac{d}{d\varepsilon}\bigg|_{\varepsilon=0^+}V(\widehat{\Sigma}_\varepsilon)
+\phi\left(\lim_{\varepsilon\to0^+}\frac{V(\widehat{\Sigma}_2)^\frac{1}{n+1}}{V(\widehat{\Sigma}_\varepsilon)^\frac{1}{n+1}}\right)\\
&=-V(\widehat{\Sigma}_1)^{-1}V_\phi(\widehat{\Sigma}_1,\widehat{\Sigma}_2)+\phi\left(\frac{V(\widehat{\Sigma}_2)^\frac{1}{n+1}}
{V(\widehat{\Sigma}_1)^\frac{1}{n+1}}\right),
\end{aligned}
\end{equation*}
where we used the L'Hospital's rule and the variational formula \eqref{Variational-Formula}. Thus, we obtain the Orlicz-Minkowski inequality \eqref{Orlicz-Minkowski-inequality}. If the equality holds in \eqref{Orlicz-Minkowski-inequality}, then
\begin{equation*}
\frac{d}{d\varepsilon}\bigg|_{\varepsilon=0^+}f(\varepsilon)=0.
\end{equation*}
This reduces $f\equiv0$, i.e., the equality holds in Orlicz-Brunn-Minkowski inequality \eqref{Orlicz-Brunn-Minkowski-inequality}, so $\widehat{\Sigma}_1$ and $\widehat{\Sigma}_2$ are dilates whenever $\phi$ is strictly convex.
\end{proof}

\subsection{Capillary Orlicz surface area measure}

Let $\widehat{\Sigma}\in\mathcal{K}_\theta$, the wetting energy of $\widehat{\Sigma}$ is given by
\begin{equation*}
A(\widehat{\Sigma})=\int_{\mathcal{C}_\theta}\ell(\xi)\det((h_{\widehat{\Sigma}})_{ij}+h_{\widehat{\Sigma}}\delta_{ij})\,d\xi,
\end{equation*}
and the capillary surface area measure \cite{Mei-Wang-Weng-The_capillary_Minkowski_problem} of $\widehat{\Sigma}$ is defined by \begin{equation*}
dS^c(\widehat{\Sigma},\xi)=\ell(\xi)\det((h_{\widehat{\Sigma}})_{ij}+h_{\widehat{\Sigma}}\delta_{ij})\,d\xi.
\end{equation*}
Then, we can find that $S^c(\widehat{\Sigma},\cdot)$ is a localization of the wetting energy $A(\widehat{\Sigma})$. Inspired by the variational formula \eqref{Variational-Formula}, we can find that the variation $\frac{d}{d\varepsilon}\big|_{\varepsilon=0}V(\widehat{\Sigma}_\varepsilon)$ is a Orlicz surface area of $\widehat{\Sigma}_1$ for $\widehat{\Sigma}_2=\widehat{\mathcal{C}_\theta}$. Therefore, we define the capillary Orlicz surface area measure as follows.
\begin{definition}
Given $\phi\in\mathcal{O}$. We define the capillary Orlicz surface area measure of $\widehat{\Sigma}\in\mathcal{K}_\theta^\circ$ by
\begin{equation*}
S^c_\phi(\widehat{\Sigma},\omega)=\int_\omega\phi\bigg(\frac{\ell(\xi)}{h_{\widehat{\Sigma}}(\xi)}\bigg) h_{\widehat{\Sigma}}(\xi)\det((h_{\widehat{\Sigma}})_{ij}+h_{\widehat{\Sigma}}\delta_{ij})\,d\xi,
\end{equation*}
for any Borel set $\omega\subset\mathcal{C}_\theta$.
\end{definition}

If we choose $\phi(x)=x^p$, then the capillary Orlicz surface area measure $S^c_\phi(\widehat{\Sigma},\cdot)$ reduces to the capillary $L_p$ surface area measure $S^c_p(\widehat{\Sigma},\cdot)$ \cite{Mei-Wang-Weng-The_capillary_L_p_Minkowski_problem} without considering the power of $\ell$. Here, the capillary $L_p$ surface area measure of $\widehat{\Sigma}$ is defined by
\begin{equation*}
dS^c_p(\widehat{\Sigma},\xi)=\ell(\xi)h^{1-p}_{\widehat{\Sigma}}(\xi)\det((h_{\widehat{\Sigma}})_{ij}+h_{\widehat{\Sigma}}\delta_{ij})
\,d\xi,\ \xi\in\mathcal{C}_\theta.
\end{equation*}
Naturally, we propose a capillary version of the Orlicz-Minkowski problem as follows.
\vskip 2mm
\noindent {\bf Capillary Orlicz-Minkowski problem:} {\it Let $\phi\in\mathcal{O}$. Given a positive, smooth function $f$ on $\mathcal{C}_\theta$, what does there exist a capillary convex body $\widehat{\Sigma}\in\mathcal{K}_\theta^\circ$ such that
\begin{equation*}
\frac{dS^c_\phi(\widehat{\Sigma},\xi)}{d\xi}=\phi\bigg(\frac{\ell(\xi)}{h_{\widehat{\Sigma}}(\xi)}\bigg) h_{\widehat{\Sigma}}(\xi)\det((h_{\widehat{\Sigma}})_{ij}+h_{\widehat{\Sigma}}\delta_{ij})=f(\xi),\ \ \forall\xi\in\mathcal{C}_\theta?
\end{equation*}}

Using \eqref{Preliminaries-formula-1}, we can reduce the capillary Orlicz-Minkowski problem to the Monge-Amp\`{e}re type equation with a Robin boundary value condition as follows:
\begin{equation*}
\left\{
\begin{aligned}
\phi\left(\frac{\ell}{h}\right)h\det(\nabla^2h+h\delta_{\mathbb{S}^n})&=f,\ \ \ \ \ \ \ \ \ \mbox{in}\ \mathcal{C}_\theta,\\
\nabla_\mu h&=\cot\theta h,\ \ \ \mbox{on}\ \partial\mathcal{C}_\theta.
\end{aligned}
\right.
\end{equation*}
From \cite[Proposition 2.4]{Mei-Wang-Weng-The_capillary_Minkowski_problem}, we know
\begin{equation*}
\det(\nabla^2h+h\delta_{\mathbb{S}^n})=\det(\ell\nabla^2u+\cos\theta(\nabla u\otimes e^T+e^T\otimes\nabla u)+u\delta_{\mathbb{S}^n}).
\end{equation*}
Therefore, the capillary Orlicz-Minkowski problem is also equivalent to a Neumann boundary value problem
\begin{equation*}
\left\{
\begin{aligned}
\phi\left(\frac{1}{u}\right)u\det(\ell\nabla^2u+\cos\theta(\nabla u\otimes e^T+e^T\otimes\nabla u)+u\delta_{\mathbb{S}^n})&=f,\ \ \ \ \ \ \ \ \ \mbox{in}\ \mathcal{C}_\theta,\\
\nabla_\mu u&=0,\ \ \ \ \ \ \ \ \ \mbox{on}\ \partial\mathcal{C}_\theta.
\end{aligned}
\right.
\end{equation*}

\section{A priori estimates}\label{A priori estimates}

To solve the Robin boundary value problem \eqref{Robin-problem-of-the-Monge-Ampere-equation}, we need the a priori estimates of \eqref{Robin-problem-of-the-Monge-Ampere-equation}. For the more wide application scope, we consider the following normalized problem:
\begin{equation}\label{Robin-problem-of-the-Monge-Ampere-equation-normalized}
\left\{
\begin{aligned}
\frac{1}{|\widehat{\Sigma}|}\phi\Big(\frac{\ell}{h}\Big)h\det(h_{ij}+h\delta_{ij})&=f,\ \ \ \ \ \ \ \ \ \mbox{in}\ \mathcal{C}_\theta,\\
\nabla_\mu h&=\cot\theta h,\ \ \ \mbox{on}\ \partial\mathcal{C}_\theta.
\end{aligned}
\right.
\end{equation}
Here $|\widehat{\Sigma}|=V(\widehat{\Sigma})$ is the volume of $\widehat{\Sigma}\in\mathcal{K}_\theta^\circ$. It is easy to check that the a priori estimates of \eqref{Robin-problem-of-the-Monge-Ampere-equation} with $|\widehat{\Sigma}|=1$ can be directly derived from the a priori estimates of \eqref{Robin-problem-of-the-Monge-Ampere-equation-normalized}.

\subsection{$C^0$-estimate}

\begin{lemma}\label{Lemma-C-0-estimate}
Given $\phi\in\mathcal{O}$ and $\theta\in(0,\pi)$. Let $h$ be a positive, capillary even convex solution to \eqref{Robin-problem-of-the-Monge-Ampere-equation-normalized}, then there exists a constant $C_0>0$ depending only on $f,n,\theta,\phi$ such that
\begin{equation}\label{C-0-estimate}
\frac{1}{C_0}\leqslant h\leqslant C_0,\ \ {\rm on}\ \mathcal{C}_\theta.
\end{equation}
\end{lemma}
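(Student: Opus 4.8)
The plan is to establish the two-sided bound \eqref{C-0-estimate} by combining a volume/integral normalization with the evenness assumption, in the spirit of the $C^0$-estimates for the classical even Minkowski and $L_p$-Minkowski problems. First I would record the basic consequences of the equation \eqref{Robin-problem-of-the-Monge-Ampere-equation-normalized}: integrating against suitable test functions. Integrating the equation and using the definition of the capillary Orlicz surface area measure shows $\int_{\mathcal{C}_\theta} f\,d\xi = \frac{1}{|\widehat\Sigma|}S^c_\phi(\widehat\Sigma,\mathcal{C}_\theta)$, which couples the size of $h$ to $|\widehat\Sigma|$ and to the fixed quantity $\int f$. A second, more useful identity comes from multiplying by $\ell$ (or by $1/\ell$ against $dV^c$) and using that $\frac{1}{n+1}\int_{\mathcal{C}_\theta}\ell\,h\det(h_{ij}+h\delta_{ij})\,d\xi = V(\widehat\Sigma) = |\widehat\Sigma|$; this together with the Orlicz-Minkowski inequality \eqref{Orlicz-Minkowski-inequality} applied to $\widehat\Sigma$ and $\widehat{\mathcal{C}_\theta}$ pins down $|\widehat\Sigma|$ between two positive constants depending only on $f,n,\theta,\phi$ (using assumption $A_2$, which forces $\phi$ to grow at least like $x^{n+1}$, to rule out $|\widehat\Sigma|\to 0$, and monotonicity/$A_1$ to rule out $|\widehat\Sigma|\to\infty$). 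So I may assume $c_1\le|\widehat\Sigma|\le c_2$.

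Next I would pass from the volume bound to a pointwise bound. Write $h = \ell u$ with $u$ the capillary support function; by \eqref{Preliminaries-formula-1} $u$ satisfies the Neumann condition $\nabla_\mu u = 0$, and $u$ is a spherically convex function on $\mathcal{C}_\theta$. Let $R = \max_{\mathcal{C}_\theta} h$ and let $\xi_0$ be a maximum point. Convexity of $\widehat\Sigma$ gives a lower bound $h(\xi)\ge R\cdot g(\xi,\xi_0)$ on a definite portion of $\mathcal{C}_\theta$ for an explicit geometric factor $g$ (the usual "cone below the support point" estimate, adapted to the spherical cap geometry as in \cite{Mei-Wang-Weng-The_capillary_Minkowski_problem}). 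Feeding this into the volume integral $|\widehat\Sigma| = \frac{1}{n+1}\int \ell h\,dS(\cdot)$ — or more robustly into $\int f = \frac{1}{|\widehat\Sigma|}\int \phi(\ell/h)h\,dS(\cdot)$ using that $\phi$ is increasing so $\phi(\ell/h)h$ is controlled when $h$ is large — yields $R\le C_0$: if $R$ were too large the body would contain too much volume (or the integral $\int f$ would be forced below its fixed value). This is where the evenness of $h$ enters crucially: without symmetry a convex body can be long and thin with bounded volume, but a symmetric capillary convex body whose support function is large somewhere is large in the antipodal horizontal direction as well, so largeness in one direction propagates and the volume lower bound becomes genuine; I would make this precise by noting $h(\xi)$ and $h(\widehat\xi)$ are both comparable to $R$ and using spherical convexity to fill in a full "slab", giving a volume lower bound proportional to $R$, hence $R\le C_0$.

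Finally, the lower bound $h\ge 1/C_0$ follows from the upper bound together with the volume lower bound $|\widehat\Sigma|\ge c_1$: if $\min h$ were too small, then since $h\le C_0$ everywhere the body $\widehat\Sigma$ is trapped in a region of small volume (a capillary convex body with $h\le C_0$ and $\min h\le\delta$ has volume $\to0$ as $\delta\to0$, again using evenness so the body cannot simply be translated away from the origin — the origin lies in the interior of its flat boundary), contradicting $|\widehat\Sigma|\ge c_1$. The main obstacle I anticipate is the second step: correctly quantifying how the spherical-cap geometry and the contact-angle constraint enter the "cone estimate," and verifying that the Orlicz weight $\phi(\ell/h)h$ (rather than a clean power $h^{1-p}$) still gives a one-signed contribution strong enough to close the argument — this is exactly where assumptions $A_1$ (so small $\ell/h$ is harmless) and $A_3$ (log-concavity of $\phi(x)/x$, controlling the ratio $\phi(\ell/h)h/\ell$) are needed, and it requires a careful case split according to whether $\ell/h$ is large or small on the region under consideration.
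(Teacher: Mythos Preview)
Your broad plan --- a cone-type lower bound on $h$ to control $\max h$, then a volume argument to control $\min h$ --- matches the paper, but one step does not go through as written. You claim a two-sided bound on $|\widehat\Sigma|$ up front, with the upper bound coming from ``monotonicity/$A_1$''; however the Orlicz--Minkowski inequality yields only $\frac{1}{n+1}\int_{\mathcal C_\theta} f\ge\phi\big((|\widehat{\mathcal C_\theta}|/|\widehat\Sigma|)^{1/(n+1)}\big)$, which is vacuous as $|\widehat\Sigma|\to\infty$ since $\phi(0)=0$, and I do not see another route to it. Fortunately the paper never needs a volume upper bound: the normalization in \eqref{Robin-problem-of-the-Monge-Ampere-equation-normalized} gives directly
\[
\int_\omega\frac{f}{\phi(\ell/h)}\,d\xi=\frac{1}{|\widehat\Sigma|}\int_\omega h\det(h_{ij}+h\delta_{ij})\,d\xi\le n+1
\]
for every $\omega\subset\mathcal C_\theta$, and once the cone estimate $h(\xi)\ge R\max\{0,\langle\xi-\cos\theta e,X_0\rangle\}$ is in place the left side grows at least like $R$ on a fixed region $\omega(X_0)$ (this is exactly where $A_1$ enters, making $s\mapsto 1/\phi(2/s)$ superlinear in $s$). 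Evenness is used only to place the origin in $\mathrm{int}(\widehat{\partial\Sigma})$; your slab-from-antipodal-points picture is not needed and, absent a volume upper bound, would not close by itself.

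For the lower bound the paper takes a slightly different route from yours: it evaluates the equation at a minimum point $\xi_0$ of $u=h/\ell$ (the Neumann condition lets this work even when $\xi_0\in\partial\mathcal C_\theta$) to obtain the pointwise inequality $|\widehat\Sigma|\,f(\xi_0)\ge\ell(\xi_0)\,u(\xi_0)^{n+1}\phi(1/u(\xi_0))$, and then observes that evenness plus the already-established upper bound force $|\widehat\Sigma|\to 0$ if $u(\xi_0)\to 0$, contradicting $A_2$. Your alternative --- first extracting a volume lower bound from the Orlicz--Minkowski inequality and $A_2$, then using that a symmetric body with small $\min h$ and bounded $\max h$ has small volume --- is also valid and invokes $A_2$ at the same place; the paper's maximum-principle step is simply more direct and avoids quantifying the width-to-volume implication in the capillary geometry.
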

\begin{proof}
Since the Steiner point of a symmetric convex body is the origin and $h$ is even, we have
\begin{equation*}
\int_{\mathcal{C}_\theta}h(\xi)\langle\xi,E_i\rangle\,d\xi=0,\ 1\leqslant i\leqslant n.
\end{equation*}
Let $\Sigma$ be a capillary hypersurface corresponding to $h$ as its support function, then by
\cite[Lemma 3.1]{Mei-Wang-Weng-The_capillary_Minkowski_problem} we know that $o\in\text{int}(\widehat{\partial\Sigma})$. Let $R$ denote the smallest positive constant such that $\widehat{\Sigma}\subset\widehat{\mathcal{C}_{\theta,R}}$, then there exists $X\in\Sigma\cap\mathcal{C}_{\theta,R}$. Set $X_0=\frac{X}{R}\in\mathcal{C}_\theta$. For any $\xi\in\mathcal{C}_\theta$, we have
\begin{equation*}
h(\xi)=\sup_{Y\in\Sigma}\langle\xi-\cos\theta e,Y\rangle\geqslant\max\{0,\langle\xi-\cos\theta e,X\rangle\}=R\max\{0,\langle\xi-\cos\theta e,X_0\rangle\}.
\end{equation*}
Define a function
\begin{equation*}
\varphi(s,t)=\frac{1}{\phi\big(\frac{t}{s}\big)},\ \forall\ s,t>0,
\end{equation*}
then $\varphi$ is strictly increasing with respect to $s$ if we fix $t$, and $\varphi$ is strictly decreasing with respect to $t$ if we fix $s$. Hence, together with $\ell\leqslant2$, we have
\begin{equation*}
\varphi(h(\xi),\ell(\xi))\geqslant\varphi(R\max\{0,\langle\xi-\cos\theta e,X_0\rangle\},2),\ \xi\in\mathcal{C}_\theta.
\end{equation*}
From the assumption $A_1$ in \eqref{Assumptions-phi}, we have $\lim_{s\to+\infty}\varphi(s,2)/s=+\infty$, so there exists a constant $N>0$ such that $\varphi(s,2)>s$ for all $s>N$. We define
\begin{equation*}
\omega(X_0)=\Big\{\xi\in\mathcal{C}_\theta\,:\,\langle\xi-\cos\theta e,X_0/|X_0|\rangle\geqslant\frac{\sqrt{2}}{2}\Big\}.
\end{equation*}
For $\xi\in\omega(X_0)$, if $R\max\{0,\langle\xi-\cos\theta e,X_0\rangle\}>N$, then
\begin{equation}\label{C-0-estimate-proof-formula-1}
\varphi(h(\xi),\ell(\xi))\geqslant\frac{\sqrt{2}}{2}|X_0|R\geqslant\frac{\sqrt{2}}{2}\min\{1-\cos\theta,\sin\theta\}R.
\end{equation}
If $R\max\{0,\langle\xi-\cos\theta e,X_0\rangle\}\leqslant N$, then
\begin{equation*}
R\leqslant\frac{\sqrt{2}N}{\min\{1-\cos\theta,\sin\theta\}}.
\end{equation*}
Therefore, we can assume that \eqref{C-0-estimate-proof-formula-1} holds on $\omega(X_0)$. Then, integrating \eqref{C-0-estimate-proof-formula-1} on $\omega(X_0)$ yields
\begin{equation}\label{C-0-estimate-proof-formula-2}
\int_{\omega(X_0)}\varphi(h(\xi),\ell(\xi))f(\xi)\,d\xi\geqslant\frac{\sqrt{2}}{2}\min\{1-\cos\theta,\sin\theta\}R\int_{\omega(X_0)}f(\xi)\,d\xi.
\end{equation}
From \eqref{Robin-problem-of-the-Monge-Ampere-equation-normalized}, we have
\begin{equation}\label{C-0-estimate-proof-formula-3}
\int_{\omega(X_0)}\varphi(h(\xi),\ell(\xi))f(\xi)\,d\xi=\frac{1}{|\widehat{\Sigma}|}\int_{\omega(X_0)}h\det(h_{ij}+h\delta_{ij})\,d\xi
\leqslant(n+1).
\end{equation}
Combining \eqref{C-0-estimate-proof-formula-2} with \eqref{C-0-estimate-proof-formula-3}, we get
\begin{equation*}
R\leqslant\frac{(n+1)\sqrt{2}}{\min\{1-\cos\theta,\sin\theta\}\inf\{\int_{\omega(Y)}f(\xi)\,d\xi\,|\,Y\in\mathcal{C}_\theta\}}.
\end{equation*}
Noting that $h\leqslant\max\{\sin\theta,1-\cos\theta\}R$, we can complete the estimate of the upper bound.

By \cite[Proposition 2.4]{Mei-Wang-Weng-The_capillary_Minkowski_problem}, we know that \eqref{Robin-problem-of-the-Monge-Ampere-equation-normalized} is equivalent to
\begin{equation}\label{C-0-estimate-proof-formula-4}
\left\{
\begin{aligned}
\det(\ell\nabla^2u+\cos\theta(\nabla u\otimes e^T+e^T\otimes\nabla u)+u\delta_{\mathbb{S}^n})&=\frac{f|\widehat{\Sigma}|}{\ell u\phi\left(\frac{1}{u}\right)},\ \ \ \ \ \ \mbox{in}\ \mathcal{C}_\theta,\\
\nabla_\mu u&=0,\ \ \ \ \ \ \ \ \ \ \ \ \ \ \ \mbox{on}\ \partial\mathcal{C}_\theta.
\end{aligned}
\right.
\end{equation}
Suppose $u$ attains the minimum value at $\xi_0$. If $\xi_0\in\mathcal{C}_\theta\setminus\partial\mathcal{C}_\theta$, then
\begin{equation}\label{C-0-estimate-proof-formula-5}
\nabla u(\xi_0)=0\ \ \mbox{and}\ \ \nabla^2u(\xi_0)\geqslant0.
\end{equation}
If $\xi_0\in\partial\mathcal{C}_\theta$, $\nabla_\mu u=0$ implies that \eqref{C-0-estimate-proof-formula-5} still holds (If we choose an orthonormal frame $\{e_i\}_{i=1}^n$ at $\xi_0$ with $e_n=\mu$, then $\nabla_iu=0$ for $i=1,\cdots,n-1$ due to the Fermat's lemma). Combining \eqref{C-0-estimate-proof-formula-4} with \eqref{C-0-estimate-proof-formula-5}, we have
\begin{equation}\label{C-0-estimate-proof-formula-6}
|\widehat{\Sigma}|f(\xi_0)\geqslant\ell(\xi_0)u^{n+1}(\xi_0)\phi\left(\frac{1}{u(\xi_0)}\right).
\end{equation}
Assume that there is no lower bound for $h$, then $u(\xi_0)$ can tend to zero. Since $h$ is even and has the upper bound, the volume $|\widehat{\Sigma}|$ can tend to zero. Thus, from \eqref{C-0-estimate-proof-formula-6} we get
\begin{equation*}
u^{n+1}(\xi_0)\phi\left(\frac{1}{u(\xi_0)}\right)\rightarrow0,\ \mbox{if}\ u(\xi_0)\to0,
\end{equation*}
which contradicts with the assumption $A_2$ in \eqref{Assumptions-phi}
\begin{equation*}
\liminf_{x\to+\infty}\frac{\phi(x)}{x^{n+1}}>0.
\end{equation*}
This completes the proof of the lemma.
\end{proof}

\subsection{$C^1$-estimate}

Recall the distance function
\begin{equation}\label{The-distance-function}
d(\xi)={\rm dist}_{\delta_{\mathbb{S}^n}}(\xi,\partial\mathcal{C}_\theta),
\end{equation}
which is smooth except at the north pole and satisfies $\nabla d=-\mu$ on $\partial\mathcal{C}_\theta$.

\begin{lemma}\label{Lemma-C-1-estimate}
Given $\phi\in\mathcal{O}$ and $\theta\in(0,\pi)$. Let $h$ be a positive, capillary even convex solution to \eqref{Robin-problem-of-the-Monge-Ampere-equation-normalized}, then there exists a constant $C_1>0$ depending only on $f,n,\theta,\phi,\|h\|_{C^0}$ such that
\begin{equation}\label{C-1-estimate}
|\nabla h|\leqslant C_1.
\end{equation}
\end{lemma}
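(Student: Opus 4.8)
The plan is to bypass the Monge-Amp\`{e}re equation entirely and obtain the gradient bound from the $C^0$ estimate of Lemma~\ref{Lemma-C-0-estimate} using only the geometric meaning of the support function. The starting point is the pointwise representation of the inverse capillary Gauss map
\begin{equation*}
\widetilde{\nu}^{-1}(\xi)=\nabla h(\xi)+h(\xi)\,(\xi-\cos\theta\,e),\qquad \xi\in\mathcal{C}_\theta,
\end{equation*}
where $\nabla h(\xi)$ is viewed as a vector in $T_\xi\mathcal{C}_\theta\subset\mathbb{R}^{n+1}$; this is the classical identity $X=h\nu+\nabla h$ written for the unit outer normal $\nu=\xi-\cos\theta\,e$ (see the preliminaries and \cite{Mei-Wang-Weng-The_capillary_Minkowski_problem}), and it stays valid on $\partial\mathcal{C}_\theta$ since $h\in C^2(\mathcal{C}_\theta)$ up to the boundary. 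Because $\xi-\cos\theta\,e$ is a unit vector orthogonal to $T_\xi\mathcal{C}_\theta$, I would then simply invoke the Pythagorean identity to get
\begin{equation*}
|\widetilde{\nu}^{-1}(\xi)|^2=h(\xi)^2+|\nabla h(\xi)|^2,\qquad\text{hence}\qquad |\nabla h(\xi)|\leqslant|\widetilde{\nu}^{-1}(\xi)|,\quad \xi\in\mathcal{C}_\theta.
\end{equation*}

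It then remains to bound $|\widetilde{\nu}^{-1}(\xi)|=|X|$ for $X\in\Sigma$. Here I would use that, by the proof of Lemma~\ref{Lemma-C-0-estimate}, $\widehat{\Sigma}\subset\widehat{\mathcal{C}_{\theta,R}}$ with $R$ bounded in terms of $f,n,\theta,\phi$; since $\widehat{\mathcal{C}_{\theta,R}}$ lies in the ball $B\big(0,(1+|\cos\theta|)R\big)$, this gives $\sup_{X\in\Sigma}|X|\leqslant C$, and combining with the previous paragraph yields $|\nabla h|\leqslant C_1$. To see that $C_1$ may in fact be taken to depend only on $\theta$ and $\|h\|_{C^0}$, I would note that the lower bound $h(\xi)\geqslant R\max\{0,\langle\xi-\cos\theta\,e,X_0\rangle\}$ established in the proof of Lemma~\ref{Lemma-C-0-estimate}, evaluated at $\xi=X_0\in\mathcal{C}_\theta$, gives $h(X_0)\geqslant R\,\ell(X_0)$; since $\ell\geqslant\min\{1-\cos\theta,\sin^2\theta\}>0$ on $\mathcal{C}_\theta$ for every $\theta\in(0,\pi)$, this forces $R\leqslant C(\theta)\|h\|_{C^0}$, and hence $|\nabla h|\leqslant C(\theta)\|h\|_{C^0}$.

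I do not expect a genuine obstacle: this is the mildest of the a priori estimates, and the only delicate points are to apply the representation of $\widetilde{\nu}^{-1}$ (and thus the splitting $|X|^2=h^2+|\nabla h|^2$) correctly up to $\partial\mathcal{C}_\theta$, and to observe that $\ell$ is uniformly positive on $\mathcal{C}_\theta$, which is exactly what lets one trade the circumradius for $\|h\|_{C^0}$. If one prefers to avoid the representation formula, the same bound follows from a maximum-principle argument for $w=|\nabla h|^2$: an interior maximum is handled by differentiating the equation in the usual Cheng--Yau manner, while a boundary maximum is controlled by differentiating the Robin condition $\nabla_\mu h=\cot\theta\,h$ and using the distance function $d$ of \eqref{The-distance-function} as a barrier; the geometric route above, however, is shorter and cleaner.
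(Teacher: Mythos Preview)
Your argument is correct and genuinely different from the paper's. The paper proves Lemma~\ref{Lemma-C-1-estimate} by a maximum-principle argument: it works with the capillary support function $u=h/\ell$, introduces the auxiliary function $\Phi=\log(|\nabla u|^2/2)+u+Kd$ with $K=1-2\cot\theta$, and runs a three-case analysis (boundary, a neighbourhood of the north pole where it invokes the interior gradient bound of Chou--Wang, and the remaining interior region where positivity of the matrix $S_{ij}=(h_{ij}+h\delta_{ij})/\ell$ is used). Your route bypasses all of this: you use only the convex-geometry identity $X=h\nu+\nabla h$ and the Pythagorean splitting $|X|^2=h^2+|\nabla h|^2$, then control $|X|$ via the circumradius $R$ already produced in the proof of Lemma~\ref{Lemma-C-0-estimate}, sharpened to $R\leqslant C(\theta)\|h\|_{C^0}$ by evaluating $h(\xi)\geqslant R\langle\xi-\cos\theta e,X_0\rangle$ at $\xi=X_0$ and using $\ell\geqslant\min\{1-\cos\theta,\sin^2\theta\}>0$. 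This is shorter, does not touch the PDE, and in fact yields the stronger conclusion that $C_1$ depends only on $\theta$ and $\|h\|_{C^0}$; the paper's approach, on the other hand, is the template one would need if a gradient bound were required for a more general (not necessarily convex) solution or for a problem where the geometric identity is unavailable.
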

\begin{proof}
We consider the following auxiliary function
\begin{equation*}
\Phi(\xi)=\log\left(\frac{|\nabla u(\xi)|^2}{2}\right)+u(\xi)+Kd(\xi),\ \xi\in\mathcal{C}_\theta,
\end{equation*}
where $K=1-2\cot\theta$. We choose a neighborhood $\mathcal{N}$ of the north pole, such that the closure $\overline{\mathcal{N}}\subset\mathcal{C}_\theta$. Assume $\Phi$ attains the maximum value at $\xi_0\in\mathcal{C}_\theta$.

{\bf Case 1.} If $\xi_0\in\partial\mathcal{C}_\theta$, then $\nabla_\mu\Phi(\xi_0)\geqslant0$. Using an orthonormal frame $\{e_i\}_{i=1}^n$ at $\xi_0$ with $e_n=\mu$, we have from \eqref{Preliminaries-formula-1}
\begin{equation*}
\begin{aligned}
\nabla_\mu\Phi&=\frac{2}{|\nabla u|^2}\sum_{i=1}^nu_iu_{in}+u_n+K\nabla_\mu d\\
&=-\frac{2\cot\theta}{|\nabla u|^2}\sum_{i=1}^nu_i^2-K\\
&=-1,
\end{aligned}
\end{equation*}
which shows a contradiction $-1\geqslant0$. Thus, this case is impossible.

{\bf Case 2.} If $\xi_0\in\mathcal{N}$, we can get the estimates of $|\nabla u(\xi_0)|$ via the standard interior gradient bound in Chou-Wang \cite{Chou-Wang-A_variational_theory_of_the_Hessian_equation}.

{\bf Case 3.} If $\xi_0\in\mathcal{C}_\theta\setminus(\partial\mathcal{C}_\theta\cup\mathcal{N})$, we choose an orthonormal frame $\{e_i\}_{i=1}^n$ at $\xi_0$ such that
\begin{equation*}
S_{ij}=\frac{h_{ij}+h\delta_{ij}}{\ell}=u_{ij}+\frac{1}{\ell}(u_i\ell_j+u_j\ell_i)+\frac{u}{\ell}\delta_{ij}
\end{equation*}
is diagonal. Then, we have
\begin{equation}\label{C-1-estimate-proof-formula-1}
S_{ii}=u_{ii}+\frac{2\cos\theta u_i\langle e_i,e\rangle+u}{\ell},
\end{equation}
and for $i\neq j$,
\begin{equation}\label{C-1-estimate-proof-formula-2}
0=S_{ij}=u_{ij}+\frac{\cos\theta u_i\langle e_j,e\rangle+\cos\theta u_j\langle e_i,e\rangle}{\ell}.
\end{equation}
Thus, we have at $\xi_0$
\begin{equation}\label{C-1-estimate-proof-formula-3}
0=\nabla_i\Phi=\frac{2}{|\nabla u|^2}\sum_{k=1}^nu_ku_{ki}+u_i+Kd_i.
\end{equation}

Define
\begin{equation*}
I=\left\{i\,:\,|u_i|\geqslant\frac{|\nabla u|}{\sqrt{n}}\right\}.
\end{equation*}
For $i\in I$, substituting \eqref{C-1-estimate-proof-formula-2} into \eqref{C-1-estimate-proof-formula-3}, there holds
\begin{equation*}
\begin{aligned}
u_{ii}&=-\frac{1}{2}|\nabla u|^2-\frac{Kd_i|\nabla u|^2}{2u_i}-\frac{1}{u_i}\sum_{k\neq i}u_ku_{ki}\\
&=-\frac{1}{2}|\nabla u|^2-\frac{Kd_i|\nabla u|^2}{2u_i}+\frac{\cos\theta}{\ell u_i}\sum_{k\neq i}u_k\big(u_k\langle e_i,e\rangle+u_i\langle e_k,e\rangle\big)\\
&\leqslant-\frac{1}{2}|\nabla u|^2+\frac{\sqrt{n}K|\nabla d|}{2}|\nabla u|+\frac{\cos\theta(1+\sqrt{n})}{\ell}|\nabla u|.
\end{aligned}
\end{equation*}
Since $S_{ij}$ is positive definite, from \eqref{C-1-estimate-proof-formula-1} we have
\begin{equation*}
\begin{aligned}
0&\leqslant S_{ii}=u_{ii}+\frac{2\cos\theta u_i\langle e_i,e\rangle+u}{\ell}\\
&\leqslant-\frac{1}{2}|\nabla u|^2+\frac{\sqrt{n}K|\nabla d|}{2}|\nabla u|+\frac{\cos\theta(1+\sqrt{n})}{\ell}|\nabla u|+\frac{1}{\ell}(2\cos\theta|\nabla u|+|u|)\\
&\leqslant-\frac{1}{2}|\nabla u|^2+\left(\frac{\sqrt{n}K}{2}\max_{\mathcal{C}_\theta\setminus\mathcal{N}}|\nabla d|+\frac{\cos\theta(3+\sqrt{n})}{1-\cos\theta}\right)|\nabla u|+\frac{1}{(1-\cos\theta)^2}C_0\\
&=:-\frac{1}{2}|\nabla u|^2+B|\nabla u|+C,
\end{aligned}
\end{equation*}
where we used the $C^0$-estimate \eqref{C-0-estimate}. This shows $|\nabla u|\leqslant\widetilde{C_1}$ for a positive constant $\widetilde{C_1}$ depending only on $\theta,n$ and $\|h\|_{C^0}$. Finally, we get
\begin{equation*}
|\nabla h|\leqslant|\nabla\ell|u+\ell|\nabla u|\leqslant\frac{|\cos\theta|}{1-\cos\theta}C_0+2\widetilde{C_1}=:C_1,
\end{equation*}
This completes the proof of this lemma.
\end{proof}

\subsection{$C^2$-estimate}

Let $\{e_i\}_{i=1}^n$ be an orthonormal frame on $\mathcal{C}_\theta$, we denote $h_{ij}=\nabla^2h(e_i,e_j)$, $W_{ij;k}=\nabla_{e_k}(W_{ij})$ and $W_{ij;kl}=\nabla_{e_l}\nabla_{e_k}(W_{ij})$ etc.

\begin{lemma}\label{Lemma-C-2-estimate-1}
Given $\phi\in\mathcal{O}$ and $\theta\in(0,\frac{\pi}{2})$. Let $h$ be a positive, capillary even convex solution to \eqref{Robin-problem-of-the-Monge-Ampere-equation-normalized}, then there exists a constant $C_2'>0$ depending only on $f,n,\theta,\phi,\|h\|_{C^0}$ such that
\begin{equation}\label{C-2-estimate-1}
\max_{\mathcal{C}_\theta}|\nabla^2h|\leqslant\max_{\partial\mathcal{C}_\theta}|\nabla^2_{\mu\mu}h|+C_2'.
\end{equation}
\end{lemma}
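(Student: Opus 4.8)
The plan is to reduce the global bound on $|\nabla^2 h|$ to a boundary bound on the pure double-normal second derivative by a standard maximum principle argument applied to an auxiliary quantity built from the largest eigenvalue of the Weingarten-type matrix $W_{ij} = h_{ij} + h\delta_{ij}$. Concretely, I would consider the test function
\begin{equation*}
\Psi(\xi) = \log \lambda_{\max}(W)(\xi) + \gamma|\nabla h|^2 + N d(\xi),
\end{equation*}
or a variant of it (e.g. replacing $\log\lambda_{\max}$ by $\log(W_{\xi\xi})$ maximized over unit tangent directions $\xi$, together with the usual device of fixing a frame that diagonalizes $W$ at the maximum point), where $d$ is the distance function from \eqref{The-distance-function}, $\gamma>0$ is to be chosen small, and $N>0$ large, both depending only on $f,n,\theta,\phi,\|h\|_{C^0}$ and the already established $C^1$ bound \eqref{C-1-estimate}. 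If the maximum of $\Psi$ occurs at an interior point of $\mathcal{C}_\theta\setminus\partial\mathcal{C}_\theta$ (away from the north pole), then differentiating the equation \eqref{Robin-problem-of-the-Monge-Ampere-equation-normalized} — written in the form $\log\det W = \log f + \log|\widehat{\Sigma}| - \log h - \log\phi(\ell/h)$ — twice and feeding this into the second-derivative test for $\Psi$ should, after using the concavity of $\log\det$ on positive matrices and absorbing the lower-order terms coming from the $\phi(\ell/h)$ and $h$ factors (which are controlled because $\phi\in\mathcal{O}$ is $C^2$ and $h$ is already pinched between two positive constants), yield an a priori bound $\lambda_{\max}(W)(\xi_0)\leqslant C$ with $C$ depending only on the stated data; if the maximum is at the north pole the interior estimates of Chou–Wang \cite{Chou-Wang-A_variational_theory_of_the_Hessian_equation} (invoked exactly as in Case 2 of the previous lemma) give the bound directly.

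The new and genuinely capillary feature is the boundary case: if $\Psi$ attains its maximum at $\xi_0\in\partial\mathcal{C}_\theta$, I would compute $\nabla_\mu\Psi(\xi_0)\geqslant 0$ using the boundary relations \eqref{Preliminaries-formula-1} and \eqref{Preliminaries-formula-2}. The identities $h_{in}=0$ for $i<n$ on $\partial\mathcal{C}_\theta$ mean that at a boundary point the matrix $W$ is block-diagonal, with the tangential block $(W_{ij})_{i,j<n}$ decoupled from the $W_{nn}=h_{nn}+h$ entry; hence $\lambda_{\max}(W)$ at a boundary point is either $W_{nn}$ (a double-normal quantity, which is precisely the term we are allowed to keep on the right side of \eqref{C-2-estimate-1}) or an eigenvalue of the tangential block. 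In the first sub-case there is nothing to prove — $\Psi(\xi_0)$ is already controlled in terms of $\max_{\partial\mathcal{C}_\theta}|\nabla^2_{\mu\mu}h|$. In the second sub-case, the tangential eigenvalue is realized by a unit tangent vector $e_i$ with $i<n$, and then one must differentiate the relation $W_{ii}$ in the $\mu$-direction along the boundary; here the Codazzi-type structure of $W$ together with \eqref{Preliminaries-formula-2} ($u_{in}=-\cot\theta\, u_i$) produces a term $\nabla_\mu W_{ii}$ that is linear, not quadratic, in the second derivatives, and combined with the good sign of $N\nabla_\mu d = -N$ and the restriction $\theta\in(0,\frac{\pi}{2})$ (so that $\cot\theta>0$) the inequality $\nabla_\mu\Psi(\xi_0)\geqslant 0$ becomes contradictory for $N$ large — exactly the mechanism used in Case 1 of Lemma \ref{Lemma-C-1-estimate}, one order of differentiation higher.

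The main obstacle I expect is the boundary computation in this last sub-case: one has to commute covariant derivatives on $\mathcal{C}_\theta$, keep track of curvature terms of the round metric, and carefully differentiate the constraint $h_{in}=0$ (valid only on $\partial\mathcal{C}_\theta$, so its tangential derivatives but not its normal derivative are available) to extract the correct sign. A clean way to organize this is to work throughout with the capillary support function $u = h/\ell$, for which the Neumann condition $\nabla_\mu u = 0$ and \eqref{Preliminaries-formula-2} are simpler to iterate, and to translate the bound on $\nabla^2 u$ back to $\nabla^2 h$ at the end using $h = \ell u$ and the already-established $C^0$ and $C^1$ estimates. The interior computation, by contrast, is the routine Pogorelov/Cheng–Yau type estimate and should go through with only the mild extra bookkeeping caused by the factor $\phi(\ell/h)$, whose logarithmic derivatives are bounded once $h$ is pinched away from $0$ and $\infty$.
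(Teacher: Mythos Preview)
Your overall strategy---apply a maximum principle to the largest eigenvalue of $W=\nabla^2h+h\,\delta_{\mathbb{S}^n}$ and split into interior versus boundary cases, exploiting the block-diagonal structure of $W$ on $\partial\mathcal{C}_\theta$ coming from \eqref{Preliminaries-formula-2}---is correct and matches the paper. The paper's execution, however, is considerably leaner: it uses the bare test function $P(\xi,\Xi)=\nabla^2_{\Xi,\Xi}h+h=W_{\Xi\Xi}$ with no logarithm, no $\gamma|\nabla h|^2$ term, and no $Nd(\xi)$ term. In the interior case the concavity of $\log\det$ already disposes of the third-order terms, and the structural hypotheses on $\phi\in\mathcal{O}$ (log-concavity and assumption $A_3$) give the right sign on the $h_{11}$-coefficient produced by twice differentiating the equation, so no gradient barrier is needed. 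More importantly, in the boundary tangential subcase the paper does \emph{not} seek a contradiction: the computation $\nabla_\mu(W_{11})=\cot\theta(h_{nn}-h_{11})$, together with $\nabla_\mu P\geqslant0$ at a boundary maximum and $\cot\theta>0$, yields $h_{11}\leqslant h_{nn}$ \emph{directly}, which is precisely the bound by $\max_{\partial\mathcal{C}_\theta}|\nabla^2_{\mu\mu}h|$. Your auxiliary terms are therefore unnecessary, and the $Nd$ term forces you to handle the north pole separately via Chou--Wang, a complication the paper avoids entirely.

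One genuine caution if you insist on keeping $\gamma>0$: on $\partial\mathcal{C}_\theta$ one has $\nabla_\mu|\nabla h|^2=2h_nh_{nn}=2\cot\theta\,h\,h_{nn}$ (from $h_{in}=0$ for $i<n$ and $h_n=\cot\theta\,h$), which reintroduces the uncontrolled quantity $h_{nn}$ into $\nabla_\mu\Psi$. Your claimed contradiction ``for $N$ large depending only on $f,n,\theta,\phi,\|h\|_{C^0}$'' then fails---you would need $N$ to dominate $2\gamma\cot\theta\,C_0\,M$ with $M=\max_{\partial\mathcal{C}_\theta}|h_{\mu\mu}|$, and the argument becomes circular. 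Setting $\gamma=0$ (which you allow as a variant) repairs this, but at that point you are essentially back to the paper's simpler test function.
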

\begin{proof}
We consider the function
\begin{equation*}
P(\xi,\Xi)=\nabla^2_{\Xi,\Xi}h(\xi)+h(\xi),
\end{equation*}
for $\xi\in\mathcal{C}_\theta$ and the unit vector $\Xi\in T_\xi\mathcal{C}_\theta$. Suppose that $P$ attains its maximum at $\xi_0\in\mathcal{C}_\theta$ and $\Xi_0\in T_\xi\mathcal{C}_\theta$. We divide the proof into the following two cases:

{\bf Case 1.} $\xi_0\in\mathcal{C}_\theta\setminus\partial\mathcal{C}_\theta$. We choose an orthonormal frame $\{e_i\}_{i=1}^n$ around $\xi_0$, such that $(W_{ij})=(h_{ij}+h\delta_{ij})$ is diagonal and $\Xi_0=e_1$. Without loss of generality, we assume $h_{11}>0$. Denote
\begin{equation}\label{C-2-estimate-proof-formula-1}
F(W_{ij})=\log\det(W_{ij})=\log f+\log|\widehat{\Sigma}|-\log h-\log\phi\left(\frac{\ell}{h}\right),
\end{equation}
and
\begin{equation*}
F^{ij}=\frac{\partial F}{\partial W_{ij}},\ \ F^{ij,kl}=\frac{\partial^2F}{\partial W_{ij}\partial W_{kl}}.
\end{equation*}
By taking twice covariant derivatives in the direction $e_1$ to \eqref{C-2-estimate-proof-formula-1}, and using the concavity of $F$, we have
\begin{equation*}
\begin{aligned}
\sum_{i,j=1}^nF^{ij}W_{ij;11}&=-\sum_{i,j,k,l=1}^nF^{ij,kl}W_{ij;1}W_{kl;1}+\widetilde{f}_{11}-\frac{h_{11}h-h^2_1}{h^2}
+\frac{\ell\phi'\left(\frac{\ell}{h}\right)}{h^2\phi\left(\frac{\ell}{h}\right)}h_{11}\\
&\quad+\frac{\big[\left(\phi'\left(\frac{\ell}{h}\right)\right)^2-\phi\left(\frac{\ell}{h}\right)\phi''\left(\frac{\ell}{h}\right)\big]
(\ell_1h-\ell h_1)^2}{h^4\phi^2\left(\frac{\ell}{h}\right)}
+\frac{2\phi'\left(\frac{\ell}{h}\right)\left(\ell_1h_1-\ell\frac{h_1^2}{h}\right)}{h^2\phi\left(\frac{\ell}{h}\right)}\\
&\geqslant\widetilde{f}_{11}+\left(\frac{\ell\phi'\left(\frac{\ell}{h}\right)}{h^2\phi\left(\frac{\ell}{h}\right)}-\frac{1}{h}\right)
h_{11}+\frac{2\phi'\left(\frac{\ell}{h}\right)\left(\ell_1h_1-\ell\frac{h_1^2}{h}\right)}{h^2\phi\left(\frac{\ell}{h}\right)},
\end{aligned}
\end{equation*}
where $\widetilde{f}=\log f$, and we used the log-concavity of $\phi$. Again, using the assumption $A_3$ in \eqref{Assumptions-phi}
\begin{equation*}
\frac{d}{dx}\log\frac{\phi(x)}{x}\geqslant0,\ \ \mbox{for}\ x>0,
\end{equation*}
and letting $x=\frac{\ell}{h}$, we get
\begin{equation*}
\frac{\ell\phi'\left(\frac{\ell}{h}\right)}{h^2\phi\left(\frac{\ell}{h}\right)}-\frac{1}{h}\geqslant0.
\end{equation*}
Combining the $C^0$-estimate \eqref{C-0-estimate} and the $C^1$-estimate \eqref{C-1-estimate}, we have
\begin{equation}\label{C-2-estimate-proof-formula-2}
\sum_{i,j=1}^nF^{ij}W_{ij;11}\geqslant\widetilde{f}_{11}+\frac{2\phi'\left(\frac{\ell}{h}\right)
\left(\ell_1h_1-\ell\frac{h_1^2}{h}\right)}{h^2\phi\left(\frac{\ell}{h}\right)}\geqslant\alpha,
\end{equation}
where $\alpha$ is a constant depending on $n,\theta,\phi,\|h\|_{C^0},\|h\|_{C^1},\|f\|_{C^2}$.

Using the Ricci identity
\begin{equation*}
h_{klij}=h_{ijkl}+2h_{kl}\delta_{ij}-2h_{ij}\delta_{kl}+h_{li}\delta_{kj}-h_{kj}\delta_{il},
\end{equation*}
we have
\begin{equation}\label{C-2-estimate-proof-formula-3}
\begin{aligned}
\sum_{i,j=1}^nF^{ij}h_{11ij}&=\sum_{i,j=1}^nF^{ij}(h_{ij11}+2h_{11}\delta_{ij}-2h_{ij}+h_{1i}\delta_{1j}-h_{1j}\delta_{i1})\\
&=\sum_{i,j=1}^nF^{ij}h_{ij11}+2h_{11}\sum_{i=1}^nF^{ii}-2\sum_{i,j=1}^nF^{ij}h_{ij}
\end{aligned}
\end{equation}
From \eqref{C-2-estimate-proof-formula-2}, we have
\begin{equation}\label{C-2-estimate-proof-formula-4}
\sum_{i,j=1}^nF^{ij}h_{ij11}=\sum_{i,j=1}^nF^{ij}(W_{ij;11}-h_{11}\delta_{ij})\geqslant\alpha-h_{11}\sum_{i=1}^nF^{ii}.
\end{equation}
Note that
\begin{equation}\label{C-2-estimate-proof-formula-5}
\sum_{i,j=1}^nF^{ij}h_{ij}=\sum_{i,j=1}^nF^{ij}(W_{ij}-h\delta_{ij})=n-h\sum_{i=1}^nF^{ii}.
\end{equation}
From the arithmetic-geometric mean inequality, we have
\begin{equation}\label{C-2-estimate-proof-formula-6}
\sum_{i=1}^nF^{ii}\geqslant n\left(\prod_{i=1}^nF^{ii}\right)^\frac{1}{n}=n(\det W_{ij})^{-\frac{1}{n}}
=n\left(\frac{f|\widehat{\Sigma}|}{h\phi\left(\frac{\ell}{h}\right)}\right)^{-\frac{1}{n}}\geqslant\gamma,
\end{equation}
where $\gamma>0$ is a constant depending only on $n,\theta,\phi,\|h\|_{C^0},\|f\|_{C^0}$.

At the maximum point $\xi_0$ of $P$, using \eqref{C-2-estimate-proof-formula-3}, \eqref{C-2-estimate-proof-formula-4}, \eqref{C-2-estimate-proof-formula-5}, \eqref{C-2-estimate-proof-formula-6}, we obtain
\begin{equation*}
\begin{aligned}
0&\geqslant\sum_{i,j=1}^nF^{ij}P_{ij}=\sum_{i,j=1}^nF^{ij}h_{11ij}+\sum_{i,j=1}^nF^{ij}h_{ij}\\
&=\sum_{i,j=1}^nF^{ij}h_{ij11}+2h_{11}\sum_{i=1}^nF^{ii}-\sum_{i,j=1}^nF^{ij}h_{ij}\\
&\geqslant\alpha-h_{11}\sum_{i=1}^nF^{ii}+2h_{11}\sum_{i=1}^nF^{ii}+h\sum_{i=1}^nF^{ii}-n\\
&\geqslant\gamma h_{11}+\alpha+\frac{\gamma}{C_0}-n,
\end{aligned}
\end{equation*}
that is,
\begin{equation*}
0<h_{11}\leqslant\frac{n-\alpha}{\gamma}-\frac{1}{C_0}=:C_2'.
\end{equation*}
This completes the proof in this case.

{\bf Case 2.} $\xi_0\in\partial\mathcal{C}_\theta$. In this case, we have the following two subcases:

{\bf Subcase 1.} $\Xi_0$ is tangential. We choose an orthonormal frame $\{e_i\}_{i=1}^n$ around $\xi_0\in\partial\mathcal{C}_\theta$, such that $e_n=\mu$ and $e_1=\Xi_0$ at $\xi_0$. By \eqref{Preliminaries-formula-2}, we have
\begin{equation*}
\begin{aligned}
h_{n11}&=(\nabla_{e_1}(\nabla^2h))(e_1,e_n)=\nabla_{e_1}(\nabla^2h(e_1,e_n))-\nabla^2h(\nabla_{e_1}e_1,e_n)-\nabla^2h(e_1,\nabla_{e_1}e_n)\\
&=\nabla_{e_1}(h_{1n})-\nabla^2h\left(\sum_{k=1}^{n-1}\Gamma_{11}^ke_k-\cot\theta e_n,e_n\right)-\nabla^2h(e_1,\cot\theta e_1)\\
&=\cot\theta(h_{nn}-h_{11}).
\end{aligned}
\end{equation*}
By the commutator formula for $3$-order covariant derivatives, we get
\begin{equation*}
h_{11n}=h_{1n1}+\sum_{k=1}^nh_kR_{k11n}=h_{n11}-h_n.
\end{equation*}
At the maximum point $\xi_0$ of $P$, we have
\begin{equation*}
0\leqslant\nabla_nP(\xi_0,e_1)=h_{11n}+h_n=\cot\theta(h_{nn}-h_{11}).
\end{equation*}
Since $0<\theta<\frac{\pi}{2}$, it implies
\begin{equation*}
h_{11}\leqslant h_{nn}\leqslant\max_{\partial\mathcal{C}_\theta}|\nabla^2_{\mu\mu}h|.
\end{equation*}

{\bf Subcase 2.} $\Xi_0$ is non-tangential. Write
\begin{equation*}
\Xi_0=a\tau+b\mu,\ a=\langle\Xi_0,\tau\rangle,\ b=\langle\Xi_0,\mu\rangle,
\end{equation*}
where $\tau$ is a unit tangential vector. Thus, $a^2+b^2=1$. By \eqref{Preliminaries-formula-2}, there holds at $\xi_0\in\partial\mathcal{C}_\theta$
\begin{equation*}
\nabla^2_{\Xi_0,\Xi_0}h=\nabla^2h(a\tau+b\mu,a\tau+b\mu)=a^2h_{\tau\tau}+b^2h_{\mu\mu}.
\end{equation*}
Hence, we obtain
\begin{equation*}
\begin{aligned}
P(\xi_0,\Xi_0)&=\nabla^2_{\Xi_0,\Xi_0}h(\xi_0)+h(\xi_0)\\
&=a^2h_{\tau\tau}+b^2h_{\mu\mu}+(a^2+b^2)h(\xi_0)\\
&=a^2P(\xi_0,\tau)+b^2P(\xi_0,\mu)\\
&\leqslant a^2P(\xi_0,\Xi_0)+b^2P(\xi_0,\mu),
\end{aligned}
\end{equation*}
which reduces
\begin{equation*}
P(\xi_0,\Xi_0)\leqslant P(\xi_0,\mu).
\end{equation*}
Hence, we get
\begin{equation*}
\nabla^2_{\Xi_0,\Xi_0}h\leqslant h_{\mu\mu}\leqslant\max_{\partial\mathcal{C}_\theta}|\nabla^2_{\mu\mu}h|.
\end{equation*}
This completes the proof of the estimate \eqref{C-2-estimate-1}.
\end{proof}

Let the function
\begin{equation*}
\zeta(\xi)=e^{-d(\xi)}-1,\ \xi\in\mathcal{C}_\theta,
\end{equation*}
where $d$ is the distance function in \eqref{The-distance-function}. Then, $\zeta$ has the following properties
\begin{equation}\label{Zeta-function-property}
\left\{
\begin{aligned}
&\zeta|_{\partial\mathcal{C}_\theta}=0,\\
&\nabla\zeta|_{\partial\mathcal{C}_\theta}=\mu,\\
&\nabla^2_{ij}\zeta\geqslant\frac{1}{2}\min\{\cot\theta,1\}\delta_{ij},\ \mbox{in}\ \Omega_\varepsilon=\{\xi\in\mathcal{C}_\theta\,|\,d(\xi)\leqslant\varepsilon\},
\end{aligned}
\right.
\end{equation}
where $\varepsilon$ is a sufficiently small constant. Please refer to \cite{Mei-Wang-Weng-The_capillary_Minkowski_problem}.

\begin{lemma}\label{Lemma-C-2-estimate-2}
Given $\phi\in\mathcal{O}$ and $\theta\in(0,\frac{\pi}{2})$. Let $h$ be a positive, capillary even convex solution to \eqref{Robin-problem-of-the-Monge-Ampere-equation-normalized}, then there exists a constant $C_2''>0$ depending only on $f,n,\theta,\phi,\|h\|_{C^0}$ such that
\begin{equation}\label{C-2-estimate-2}
\max_{\partial\mathcal{C}_\theta}|\nabla^2_{\mu\mu}h|\leqslant C_2''.
\end{equation}
\end{lemma}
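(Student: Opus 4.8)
The plan is to follow the Lions--Trudinger--Urbas strategy for the double-normal boundary estimate, adapting it to the Robin condition $\nabla_\mu h=\cot\theta\,h$ and reusing the machinery built for Lemma~\ref{Lemma-C-2-estimate-1}.

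First I would exploit the block structure of $W:=(h_{ij}+h\delta_{ij})$ on $\partial\mathcal{C}_\theta$. Fixing $\xi\in\partial\mathcal{C}_\theta$ and an orthonormal frame $\{e_i\}$ with $e_n=\mu$, \eqref{Preliminaries-formula-2} gives $h_{in}=0$ for $i<n$, so $W_{in}=0$ for $i<n$ and hence $\det W=W_{nn}\det W'$, where $W'=(W_{ij})_{i,j<n}$ is the tangential block. Comparing with the equation in \eqref{Robin-problem-of-the-Monge-Ampere-equation-normalized}, $\det W=\dfrac{f\,|\widehat{\Sigma}|}{h\,\phi(\ell/h)}=:g$, and using the $C^0$-estimate of Lemma~\ref{Lemma-C-0-estimate} (which, together with $\ell$ being pinched between positive constants on $\mathcal{C}_\theta$, the strict monotonicity of $\phi$ with $\phi(0)=0$, and the resulting two-sided bound on $|\widehat{\Sigma}|$, yields $0<c_1\le g\le c_2$), one gets
\[
h_{\mu\mu}+h \;=\; W_{nn} \;=\; \frac{g}{\det W'}\qquad\text{on }\partial\mathcal{C}_\theta .
\]
Since $h\le C_0$ and $h_{\mu\mu}\ge -h\ge -C_0$ automatically (from $W\ge0$), the estimate \eqref{C-2-estimate-2} is equivalent to a uniform positive lower bound $\det W'\ge c>0$ on $\partial\mathcal{C}_\theta$; equivalently, every tangential principal radius of curvature (eigenvalue of $W$ restricted to $T\partial\mathcal{C}_\theta$) must be bounded below by a constant depending only on $f,n,\theta,\phi,\|h\|_{C^0}$.

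The heart of the proof is to produce this lower bound by a barrier argument near a boundary point at which the tangential block degenerates. Arguing by contradiction, suppose $M:=\max_{\partial\mathcal{C}_\theta}h_{\mu\mu}$ is unbounded along a sequence of solutions; by Lemma~\ref{Lemma-C-2-estimate-1}, $\max_{\mathcal{C}_\theta}|\nabla^2h|\le M+C_2'$, so at a point $\xi_0$ with $h_{\mu\mu}(\xi_0)=M$ the identity above forces $W_{nn}(\xi_0)\ge M$ while the smallest tangential eigenvalue $\lambda_0:=W_{\tau_0\tau_0}(\xi_0)$ (with unit tangent direction $\tau_0$) satisfies $\lambda_0\le\big(c_2/W_{nn}(\xi_0)\big)^{1/(n-1)}\to0$. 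I would then extend $\{e_i\}$ to a frame on the collar $\Omega_\varepsilon=\{d\le\varepsilon\}$ with $e_n=-\nabla d$ and $e_1=\tau_0$ at $\xi_0$, and consider an auxiliary function of the type
\[
\Psi \;=\; \nabla^2_{e_1e_1}h \;+\; h \;-\; A\,\zeta \;+\; B\,\rho ,
\]
where $\zeta$ is the function in \eqref{Zeta-function-property}, $\rho$ a nonnegative localizing term vanishing to second order at $\xi_0$, and $A,B$ large constants determined by $\varepsilon$ and the $C^0,C^1$-bounds, chosen so that $\xi_0$ is a (local) minimum point of $\Psi$ over $\overline{\Omega_\varepsilon}$. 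Differentiating the equation twice in the $e_1$-direction and discarding the bad third-order terms exactly as in Lemma~\ref{Lemma-C-2-estimate-1} (using the log-concavity of $\phi$ and assumption $A_3$), then applying the Ricci identity, the bound $\sum_iF^{ii}\ge\gamma$ from \eqref{C-2-estimate-proof-formula-6}, and the Hessian lower bound $\nabla^2\zeta\ge\tfrac12\min\{\cot\theta,1\}\,\delta$ from \eqref{Zeta-function-property}, one shows $F^{ij}\Psi_{ij}<0$ wherever $\nabla^2_{e_1e_1}h+h$ is small, so that the minimum of $\Psi$ cannot occur in the interior of $\Omega_\varepsilon$ and is therefore attained at $\xi_0$, whence $\nabla_\mu\Psi(\xi_0)\le0$.

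Finally, using $\nabla_\mu\zeta|_{\partial\mathcal{C}_\theta}=1$ from \eqref{Zeta-function-property}, the Robin condition, and \eqref{Preliminaries-formula-2}, the same commutator computation as in Subcase~1 of Lemma~\ref{Lemma-C-2-estimate-1} gives $\nabla_\mu\!\big(\nabla^2_{e_1e_1}h\big)(\xi_0)=\cot\theta\,(h_{nn}-h_{11}-h)$ at $\xi_0$, hence
\[
\nabla_\mu\Psi(\xi_0)\;=\;\cot\theta\,\big(W_{nn}(\xi_0)-\lambda_0\big)-A+(\text{controlled lower-order terms}) .
\]
Since $\theta\in(0,\tfrac\pi2)$ gives $\cot\theta>0$ and $W_{nn}(\xi_0)$ is large while $\lambda_0$ is small, the right-hand side is strictly positive, contradicting $\nabla_\mu\Psi(\xi_0)\le0$; this rules out the degeneration and proves \eqref{C-2-estimate-2}. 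The main obstacle is precisely this last bookkeeping step: one must calibrate the base point $\xi_0$, the localizing term $\rho$, and the constants $A,B$ so that simultaneously $\xi_0$ is genuinely the minimum of $\Psi$, the merely (degenerately) elliptic operator $F^{ij}\partial_{ij}$ still forbids an interior minimum, and the boundary identity for $\nabla_\mu\Psi(\xi_0)$ has the contradictory sign — the case $n\ge3$, in which several tangential directions may degenerate at once, being the most delicate, and the reason both $\theta<\tfrac\pi2$ and the interior estimate of Lemma~\ref{Lemma-C-2-estimate-1} are used in an essential way.
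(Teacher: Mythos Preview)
Your reduction to a lower bound on the tangential block $\det W'$ via the factorization $\det W=W_{nn}\det W'$ is correct and clean, but the barrier argument you propose for that lower bound has a genuine sign obstruction.  Your auxiliary function $\Psi=\nabla^2_{e_1e_1}h+h-A\zeta+B\rho=W_{11}-A\zeta+B\rho$ is a \emph{second}-order quantity in $h$; to rule out an interior minimum you need an \emph{upper} bound on $F^{ij}\Psi_{ij}$.  But repeating the computation in Lemma~\ref{Lemma-C-2-estimate-1} gives, after the Ricci identity,
\[
F^{ij}(W_{11})_{;ij}\;=\;-F^{ij,kl}W_{ij;1}W_{kl;1}\;+\;(\log g)_{11}\;+\;W_{11}\sum_iF^{ii}\;-\;n,
\]
and the concavity term $-F^{ij,kl}W_{ij;1}W_{kl;1}=\sum_{i,j}W^{ii}W^{jj}(W_{ij;1})^2\ge0$ has no a priori upper bound --- in fact, at the degenerate points you are trying to analyze some $W^{ii}$ is large, so this term is exactly where control is lost.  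Log-concavity of $\phi$ and assumption $A_3$ only give a \emph{lower} bound on $(\log g)_{11}$, which is what makes the interior \emph{maximum} argument in Lemma~\ref{Lemma-C-2-estimate-1} work; they do not help toward the upper bound you need here.  Consequently the claim ``$F^{ij}\Psi_{ij}<0$ wherever $W_{11}$ is small'' is not justified, and the maximum-principle step breaks.

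The paper avoids this by using a \emph{first}-order barrier of Lions--Trudinger--Urbas type,
\[
Q(\xi)\;=\;\langle\nabla h,\nabla\zeta\rangle\;-\;\Big(A+\tfrac12 M\Big)\zeta\;-\;\cot\theta\,h\qquad\text{on }\Omega_\varepsilon,
\]
where $M=\max_{\partial\mathcal{C}_\theta}|h_{\mu\mu}|$ enters the coefficient.  Because $\nabla\zeta|_{\partial\mathcal{C}_\theta}=\mu$ and $\zeta|_{\partial\mathcal{C}_\theta}=0$, the Robin condition forces $Q\equiv0$ on $\partial\mathcal{C}_\theta$.  Differentiating the equation only \emph{once} gives $\sum F^{ij}W_{ij;k}=(\log g)_k$, an equality with no concavity term, so one obtains a genuine two-sided bound $|F^{ij}h_{kij}|\le\lambda_1$.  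Combining this with \eqref{Zeta-function-property} yields $\sum F^{ij}Q_{ij}\le\lambda_2(1+\sum F^{ii})-\tfrac12(A+\tfrac12M)\min\{\cot\theta,1\}\sum F^{ii}<0$ for suitable $A$, so $Q\ge0$ in $\Omega_\varepsilon$.  At the boundary point $\eta_0$ where $h_{\mu\mu}$ is maximal, computing $\nabla_\mu Q(\eta_0)\le0$ then gives $M\le A+\tfrac12M+\lambda_3$, hence the bound.  The key structural difference from your proposal is that the barrier is first order in $h$, so the differentiated equation produces no concavity term; this is what makes the minimum principle go through.
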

\begin{proof}
We consider the auxiliary function
\begin{equation*}
Q(\xi)=\langle\nabla h,\nabla\zeta\rangle-\left(A+\frac{1}{2}M\right)\zeta(\xi)-\cot\theta h(\xi), \ \xi\in\Omega_\varepsilon,
\end{equation*}
where $A$ is a positive constant to be determined later, $M=\max_{\partial\mathcal{C}_\theta}|\nabla^2_{\mu\mu}h|$, $\varepsilon$ is a small constant.

Assume that $Q$ attains its minimum value at $\xi_0\in(\Omega_\varepsilon\setminus\partial\Omega_\varepsilon)$, we choose an orthonormal frame $\{e_i\}_{i=1}^n$ around $\xi_0$ such that $(W_{ij})=(h_{ij}+h\delta_{ij})$ is diagonal at $\xi_0$. By taking the covariant derivatives in the direction $e_k$ to \eqref{C-2-estimate-proof-formula-1}, we have
\begin{equation}\label{C-2-estimate-proof-formula-7}
\sum_{i,j=1}^nF^{ij}W_{ij;k}=\widetilde{f}_k+\left(\frac{\ell\phi'\left(\frac{\ell}{h}\right)}{h^2\phi\left(\frac{\ell}{h}\right)}
-\frac{1}{h}\right)h_k-\frac{\ell_k\phi'\left(\frac{\ell}{h}\right)}{h\phi\left(\frac{\ell}{h}\right)}\leqslant\lambda_1,
\end{equation}
where $\widetilde{f}=\log f$, and $\lambda_1>0$ is a constant depending only on $n,\theta,\phi,\|f\|_{C^1},\|h\|_{C^0}$ and $\|h\|_{C^1}$. Then, from \eqref{C-2-estimate-proof-formula-7}, \eqref{C-0-estimate}, \eqref{C-1-estimate} and \eqref{Zeta-function-property}, we obtain
\begin{equation*}
\begin{aligned}
0&\leqslant\sum_{i,j=1}^nF^{ij}Q_{ij}
=\sum_{i,j,k=1}^nF^{ij}h_{kij}\zeta_k+\sum_{i,j,k=1}^nF^{ij}h_k\zeta_{kij}+2\sum_{i,j,k=1}^nF^{ij}h_{ki}\zeta_{kj}\\
&\quad-\left(A+\frac{1}{2}M\right)\sum_{i,j=1}^nF^{ij}\zeta_{ij}-\cot\theta\sum_{i,j=1}^nF^{ij}h_{ij}\\
&=\sum_{i,k=1}^nF^{ii}(W_{ii;k}-h_i\delta_{ki})\zeta_k+\sum_{i,k=1}^nF^{ii}h_k\zeta_{kii}+2\sum_{i=1}^nF^{ii}(W_{ii}-h)\zeta_{ii}\\
&\quad-\left(A+\frac{1}{2}M\right)\sum_{i=1}^nF^{ii}\zeta_{ii}-\cot\theta\sum_{i=1}^nF^{ii}(W_{ii}-h)\\
&\leqslant\lambda_2\left(1+\sum_{i=1}^nF^{ii}\right)-\frac{1}{2}\left(A+\frac{1}{2}M\right)\min\{\cot\theta,1\}\sum_{i=1}^nF^{ii},
\end{aligned}
\end{equation*}
where $\lambda_2>0$ is a constant depending only on $\zeta,\lambda_1$. Let
\begin{equation}\label{C-2-estimate-proof-formula-8}
A=\frac{2}{\min\{\cot\theta,1\}}\left(\lambda_2+\frac{\lambda_2}{\gamma}\right)+1+\frac{C_1\max_{\Omega_\epsilon}|\nabla\zeta|
+C_0\cot\theta}{1-e^{-\epsilon}},
\end{equation}
then by \eqref{C-2-estimate-proof-formula-6}, we get
\begin{equation*}
\lambda_2\left(1+\sum_{i=1}^nF^{ii}\right)-\frac{1}{2}\left(A+\frac{1}{2}M\right)\min\{\cot\theta,1\}\sum_{i=1}^nF^{ii}<0.
\end{equation*}
This is a contradiction. Therefore, $\xi_0\in\partial\Omega_\varepsilon$. Next, we proceed with two cases: (1) If $\xi_0\in\partial\mathcal{C}_\theta$, it is easy to see that $Q(\xi_0)=0$; (2) If $\xi_0\in(\partial\Omega_\varepsilon\setminus\partial\mathcal{C}_\theta)$, then $d(\xi_0)=\varepsilon$. From \eqref{C-2-estimate-proof-formula-8}, we have
\begin{equation*}
Q(\xi_0)\geqslant-(\max_{\Omega_\varepsilon}|\nabla\zeta|)|\nabla h|+\left(A+\frac{1}{2}M\right)(1-e^{-\varepsilon})-\cot\theta h(\xi_0)>0.
\end{equation*}
In conclusion, we deduce that $Q(\xi)\geqslant0$ in $\Omega_\varepsilon$.

Assume $h_{\mu\mu}(\eta_0)=\max_{\partial\mathcal{C}_\theta}h_{\mu\mu}$ for some $\eta_0\in\partial\mathcal{C}_\theta$, then $\eta_0$ is clearly a minimum point of $Q$. Hence, from \eqref{Preliminaries-formula-2} we have
\begin{equation*}
\begin{aligned}
0&\geqslant\nabla_\mu Q(\eta_0)\\
&=\sum_{k=1}^n(h_{k\mu}\zeta_k+h_k\zeta_{k\mu})-\left(A+\frac{1}{2}M\right)\nabla_\mu\zeta(\eta_0)-\cot\theta\nabla_\mu h(\eta_0)\\
&\geqslant h_{\mu\mu}(\eta_0)-\left(A+\frac{1}{2}M\right)-\lambda_3,
\end{aligned}
\end{equation*}
where $\lambda_3>0$ is a constant depending only on $n,\theta,\|h\|_{C^0},\|h\|_{C^1}$. It implies
\begin{equation}\label{C-2-estimate-proof-formula-9}
\max_{\partial\mathcal{C}_\theta}h_{\mu\mu}\leqslant A+\lambda_3+\frac{1}{2}M.
\end{equation}
Similarly, we consider an auxiliary function as
\begin{equation*}
\overline{Q}(\xi)=\langle\nabla h,\nabla\zeta\rangle+\left(\overline{A}+\frac{1}{2}M\right)\zeta(\xi)-\cot\theta h(\xi), \ \xi\in\Omega_\epsilon,
\end{equation*}
where $\overline{A}$ is a positive constant. Adapting the similar argument as above, we know that $\overline{Q}(\xi)\leqslant0$ in $\Omega_\varepsilon$, and further
\begin{equation}\label{C-2-estimate-proof-formula-10}
\min_{\partial\mathcal{C}_\theta}h_{\mu\mu}\geqslant-\overline{A}-\lambda_4-\frac{1}{2}M,
\end{equation}
for a constant $\lambda_4>0$ depending only on $n,\theta,\|h\|_{C^0},\|h\|_{C^1}$. Let $C_2''=2\max\{A+\lambda_3,\overline{A}+\lambda_4\}$. Combining \eqref{C-2-estimate-proof-formula-9} and \eqref{C-2-estimate-proof-formula-10}, we have
\begin{equation*}
\max_{\partial\mathcal{C}_\theta}|\nabla^2_{\mu\mu}h|\leqslant C_2''.
\end{equation*}
This completes the proof of this lemma.
\end{proof}

\subsection{High order estimates}

From the previous $C^0$-estimate, $C^1$-estimate and $C^2$-estimate, we can establish the high order estimate in this subsection.
\begin{lemma}
Given $\phi\in\mathcal{O}$ and $\theta\in(0,\frac{\pi}{2})$. Let $h$ be a positive, capillary even convex solution to \eqref{Robin-problem-of-the-Monge-Ampere-equation-normalized}. For any integer $m\geqslant1$ and $\alpha\in(0,1)$, there exists a constant $C>0$ depending only on $n,\theta,\phi,\|f\|_{C^{m+1}}$, such that
\begin{equation}\label{High-order-estimates}
\|\nabla h\|_{C^{m+1,\alpha}}\leqslant C.
\end{equation}
\end{lemma}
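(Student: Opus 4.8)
The plan is to bootstrap from the a priori bounds already in hand. First I would record that Lemmas \ref{Lemma-C-0-estimate}--\ref{Lemma-C-2-estimate-2} give a uniform two-sided bound on the eigenvalues of $W_{ij}=h_{ij}+h\delta_{ij}$: the upper bound is immediate from \eqref{C-2-estimate-1}, \eqref{C-2-estimate-2} and \eqref{C-0-estimate}, while a uniform positive lower bound follows because
\[
\det W_{ij}=\frac{f|\widehat{\Sigma}|}{h\,\phi(\ell/h)}
\]
is bounded below by a positive constant (using \eqref{C-0-estimate}, the positivity of $f$ and $\phi\in\mathcal{O}$), so the least eigenvalue of $W_{ij}$ is controlled from below by the ratio of this constant to the product of the remaining, already bounded, eigenvalues. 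Hence, writing \eqref{Robin-problem-of-the-Monge-Ampere-equation-normalized} in the form
\[
G(\nabla^2h,\nabla h,h,\xi):=\log\det(h_{ij}+h\delta_{ij})-\log f-\log|\widehat{\Sigma}|+\log h+\log\phi(\ell/h)=0,
\]
the operator $G$ is uniformly elliptic on the class of admissible $h$, with ellipticity constants depending only on $n,\theta,\phi,\|f\|_{C^0},\|h\|_{C^2}$, and $G$ is concave in the Hessian variable.

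Next I would upgrade the $C^{1,1}$ control to $C^{2,\alpha}$ estimates up to the boundary. In the interior this is precisely the Evans--Krylov theorem for the concave uniformly elliptic equation $G=0$. On $\partial\mathcal{C}_\theta$ one must handle the oblique (Robin) condition $\nabla_\mu h=\cot\theta\,h$; equivalently, after the substitution $u=h/\ell$ appearing in \eqref{C-0-estimate-proof-formula-4}, the linear Neumann condition $\nabla_\mu u=0$. The boundary $C^{2,\alpha}$ estimate for a concave uniformly elliptic equation under a linear oblique/Neumann boundary condition is classical, following the boundary Evans--Krylov-type estimates of Lieberman--Trudinger as already exploited in \cite{Mei-Wang-Weng-The_capillary_Minkowski_problem,Lions-Trudinger-Urbas-The_Neumann_problem_for_equations_of_Monge_Ampere_type}; it yields $\|h\|_{C^{2,\alpha}(\mathcal{C}_\theta)}\leqslant C$ with $C$ depending only on $n,\theta,\phi,\|f\|_{C^2}$.

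Finally, once $h\in C^{2,\alpha}$, the linearised operator $F^{ij}\nabla_{ij}$ has $C^{\alpha}$ coefficients and the boundary operator is linear with smooth coefficients, so differentiating both the equation and the boundary condition and invoking the Schauder theory for oblique derivative problems gives $\nabla h\in C^{2,\alpha}$, hence $h\in C^{3,\alpha}$; iterating this bootstrap $m$ times produces \eqref{High-order-estimates} with constants controlled by $\|f\|_{C^{m+1}}$. The main obstacle is the boundary $C^{2,\alpha}$ step: away from the boundary this is the standard Evans--Krylov plus Schauder machinery, but reconciling the Monge--Amp\`ere structure in the interior with the oblique boundary condition is where one must invoke (or adapt to the spherical-cap setting) the Lieberman--Trudinger boundary estimates, exactly as in the capillary Minkowski problem of \cite{Mei-Wang-Weng-The_capillary_Minkowski_problem}.
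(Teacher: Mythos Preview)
Your proposal is correct and follows essentially the same route as the paper's proof: combine the $C^0$--$C^2$ lemmas to obtain uniform ellipticity of the Monge--Amp\`ere operator, invoke the boundary $C^{2,\alpha}$ theory for concave fully nonlinear equations with oblique boundary conditions (the paper cites \cite{Lions-Trudinger-Urbas-The_Neumann_problem_for_equations_of_Monge_Ampere_type} directly), and then bootstrap via Schauder. You have in fact spelled out more detail than the paper does---in particular the explicit two-sided eigenvalue bound on $W_{ij}$ and the identification of the boundary Evans--Krylov step as the crux---so nothing is missing.
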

\begin{proof}
By Lemma \ref{Lemma-C-0-estimate}, Lemma \ref{Lemma-C-1-estimate}, Lemma \ref{Lemma-C-2-estimate-1} and Lemma \ref{Lemma-C-2-estimate-2}, we obtain the $C^2$-estimate
\begin{equation*}
\|\nabla h\|_{C^2}\leqslant C.
\end{equation*}
By the theory of fully nonlinear second-order uniformly elliptic equations with oblique boundary conditions (see, e.g., \cite{Lions-Trudinger-Urbas-The_Neumann_problem_for_equations_of_Monge_Ampere_type}), we have
\begin{equation*}
\|\nabla h\|_{C^{2,\alpha}}\leqslant C.
\end{equation*}
Finally, the standard bootstrap techniques \cite{Gilbarg-Trudinger-book} imply the desired estimates \eqref{High-order-estimates}.
\end{proof}

\section{The proof of Theorem \ref{Main-Theorem}}\label{The proof of Theorem}

In this section, we use the continuity method to complete the proof of Theorem \ref{Main-Theorem}. Let
\begin{equation*}
f_t=(1-t)\phi(1)\ell+tf,\ \ \mbox{for}\ \ 0\leqslant t\leqslant 1.
\end{equation*}
Consider the problem
\begin{equation}\label{Robin-problem-of-the-Monge-Ampere-equation-f-t}
\left\{
\begin{aligned}
\det(h_{ij}+h\delta_{ij})&=\frac{1}{h\phi\left(\frac{\ell}{h}\right)}f_t,\ \ \ \ \mbox{in}\ \mathcal{C}_\theta,\\
\nabla_\mu h&=\cot\theta h,\ \ \ \ \ \ \ \ \mbox{on}\ \partial\mathcal{C}_\theta.
\end{aligned}
\right.
\end{equation}
Define the set
\begin{equation*}
\begin{aligned}
\mathcal{H}=&\bigg\{h\in C^{4,\alpha}(\mathcal{C}_\theta)\,:\,\nabla_\mu h=\cot\theta h\ \mbox{on}\ \partial\mathcal{C}_\theta,\ \mbox{and}\ \int_{\mathcal{C}_\theta}hv=0\\
&\quad\mbox{whenever}\ \int_{\mathcal{C}_\theta}\left(\frac{\ell\phi'\left(\frac{\ell}{h}\right)}{h\phi\left(\frac{\ell}{h}\right)}-n-1\right)
v\det(h_{ij}+h\delta_{ij})=0.\bigg\}
\end{aligned}
\end{equation*}
and we denote
\begin{equation*}
\mathcal{I}=\{t\in[0,1]\,:\,\mbox{Eq.}\ \eqref{Robin-problem-of-the-Monge-Ampere-equation-f-t}\ \mbox{has a positive even solution in}\ \mathcal{H}\}.
\end{equation*}

We define the nonlinear operator
\begin{equation*}
\mathcal{G}(h)=\det(h_{ij}+h\delta_{ij})-\frac{1}{h\phi\left(\frac{\ell}{h}\right)}f_t,\ h\in\mathcal{H}.
\end{equation*}
Thus, the linearized operator of $\mathcal{G}$ is
\begin{equation*}
L_h(v)=\sum_{i,j=1}^nc(W)_{ij}(v_{ij}+v\delta_{ij})+\frac{v}{h^2\phi\left(\frac{\ell}{h}\right)}
\left(1-\frac{\ell\phi'\left(\frac{\ell}{h}\right)}{h\phi\left(\frac{\ell}{h}\right)}\right)f_t,
\end{equation*}
where $c(W)_{ij}$ is the cofactor matrix of $W_{ij}=h_{ij}+h\delta_{ij}$.

\begin{lemma}\label{Lemma-symmetric-operator}
For $h,v,w\in\mathcal{H}$, there holds
\begin{equation*}
\int_{\mathcal{C}_\theta}wL_h(v)=\int_{\mathcal{C}_\theta}vL_h(w).
\end{equation*}
\end{lemma}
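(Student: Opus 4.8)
The plan is to verify the claimed symmetry by direct computation, using integration by parts together with the Robin boundary condition satisfied by every element of $\mathcal{H}$. The operator $L_h$ has a second-order part $\sum_{i,j}c(W)_{ij}(v_{ij}+v\delta_{ij})$ and a zeroth-order part $q\,v$, where $q=\dfrac{1}{h^2\phi(\ell/h)}\Big(1-\dfrac{\ell\phi'(\ell/h)}{h\phi(\ell/h)}\Big)f_t$ depends only on $h$. The zeroth-order contribution is trivially symmetric since $\int_{\mathcal{C}_\theta}w\,qv=\int_{\mathcal{C}_\theta}v\,qw$, so the entire content of the lemma lies in showing
\begin{equation*}
\int_{\mathcal{C}_\theta}w\sum_{i,j=1}^n c(W)_{ij}(v_{ij}+v\delta_{ij})
=\int_{\mathcal{C}_\theta}v\sum_{i,j=1}^n c(W)_{ij}(w_{ij}+w\delta_{ij}).
\end{equation*}
First I would recall the standard divergence-free property of the cofactor matrix of $W_{ij}=h_{ij}+h\delta_{ij}$: one has $\sum_i\nabla_i c(W)_{ij}=0$ for each $j$, which is the Codazzi-type identity valid on the round sphere (and hence on $\mathcal{C}_\theta\subset\mathbb{S}^n$). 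Using this, I would integrate by parts twice in the second-order term, moving both derivatives off $v$ and onto $w$; the bulk terms reorganize exactly into $\sum_{i,j}c(W)_{ij}w_{ij}$ plus the symmetric $\sum_{i,j}c(W)_{ij}\delta_{ij}vw$ piece, while each integration by parts produces a boundary integral over $\partial\mathcal{C}_\theta$.

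The real work is to show that the accumulated boundary terms cancel. After the two integrations by parts, the boundary contribution is a sum of integrals over $\partial\mathcal{C}_\theta$ of expressions bilinear in $v,w$ and their first derivatives, with coefficients built from $c(W)_{ij}$ evaluated on the boundary, contracted against the co-normal $\mu$. Here I would choose the adapted orthonormal frame $\{e_i\}_{i=1}^n$ with $e_n=\mu$ along $\partial\mathcal{C}_\theta$ used in Section~\ref{Preliminaries}, so that \eqref{Preliminaries-formula-2} gives $h_{in}=0$ for $i=1,\dots,n-1$, which forces $W_{in}=h_{in}+h\delta_{in}=0$ for $i\le n-1$; consequently the cofactor matrix $c(W)_{ij}$ is also block-diagonal on $\partial\mathcal{C}_\theta$, with $c(W)_{in}=0$ for $i\le n-1$. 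This kills most mixed boundary terms and leaves only contributions involving $c(W)_{nn}$ and the normal derivatives $\nabla_\mu v$, $\nabla_\mu w$. The Robin condition $\nabla_\mu v=\cot\theta\, v$, $\nabla_\mu w=\cot\theta\, w$ (part of membership in $\mathcal{H}$) then converts those normal derivatives into multiples of $v,w$ themselves, at which point the remaining boundary expression is manifestly symmetric in $v$ and $w$ and the two sides subtract to zero. (One should also keep track of the tangential-derivative boundary terms; these are themselves symmetric in $v,w$ directly, or vanish after a further integration by parts along the closed boundary $\partial\mathcal{C}_\theta$.)

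The main obstacle I anticipate is precisely the bookkeeping of the boundary integrals: one must be careful that the integrations by parts are performed so that the boundary terms are expressed consistently (e.g. always in terms of $c(W)_{nj}\nabla_j(\cdot)$ contracted against $\mu$), and one must use \eqref{Preliminaries-formula-2} at exactly the right moment to exploit the vanishing of the off-diagonal boundary entries of $W$ — and hence of $c(W)$ — before invoking the Robin condition. Once both the frame identity \eqref{Preliminaries-formula-2} and the boundary condition $\nabla_\mu(\cdot)=\cot\theta(\cdot)$ are applied, symmetry is immediate; the zeroth-order term was symmetric from the start, so summing the two pieces completes the proof.
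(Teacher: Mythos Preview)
Your proposal is correct and follows essentially the same route as the paper: the zeroth-order term is trivially symmetric, and the symmetry of the second-order part is exactly the content of Lemma~4.1 in \cite{Mei-Wang-Weng-The_capillary_Minkowski_problem}, which the paper simply cites. Your outline (divergence-free cofactor matrix, block-diagonality of $W$ and hence of $c(W)$ on $\partial\mathcal{C}_\theta$ via \eqref{Preliminaries-formula-2}, then the Robin condition $\nabla_\mu(\cdot)=\cot\theta\,(\cdot)$ to kill the remaining $c(W)_{nn}(w\nabla_\mu v - v\nabla_\mu w)$ boundary term) is precisely how that cited lemma is proved, so you have in effect reproduced its argument rather than invoked it.
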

\begin{proof}
By Lemma 4.1 in \cite{Mei-Wang-Weng-The_capillary_Minkowski_problem}, there holds
\begin{equation*}
\int_{\mathcal{C}_\theta}w\sum_{i,j=1}^nc(W)_{ij}(v_{ij}+v\delta_{ij})=\int_{\mathcal{C}_\theta}v\sum_{i,j=1}^nc(W)_{ij}(w_{ij}+w\delta_{ij}).
\end{equation*}
Thus, we have
\begin{equation*}
\begin{aligned}
\int_{\mathcal{C}_\theta}wL_h(v)&=\int_{\mathcal{C}_\theta}w\left(\sum_{i,j=1}^nc(W)_{ij}(v_{ij}+v\delta_{ij})
+\frac{v}{h^2\phi\left(\frac{\ell}{h}\right)}\left(1-\frac{\ell\phi'\left(\frac{\ell}{h}\right)}
{h\phi\left(\frac{\ell}{h}\right)}\right)f_t\right)\\
&=\int_{\mathcal{C}_\theta}v\left(\sum_{i,j=1}^nc(W)_{ij}(w_{ij}+w\delta_{ij})+\frac{w}{h^2\phi\left(\frac{\ell}{h}\right)}
\left(1-\frac{\ell\phi'\left(\frac{\ell}{h}\right)}{h\phi\left(\frac{\ell}{h}\right)}\right)f_t\right)\\
&=\int_{\mathcal{C}_\theta}vL_h(w).
\end{aligned}
\end{equation*}
This completes the proof of this lemma.
\end{proof}

\begin{lemma}\label{Lemma-ker-space}
Let $\phi\in\mathcal{O}$ and $\theta\in(0,\frac{\pi}{2})$. Suppose $v\in\mathcal{H}$ and $v\in{\rm Ker}(L_h)$, then
\begin{equation*}
\int_{\mathcal{C}_\theta}\left(\frac{\ell\phi'\left(\frac{\ell}{h}\right)}{h\phi\left(\frac{\ell}{h}\right)}-n-1\right)
v\det(h_{ij}+h\delta_{ij})=0.
\end{equation*}
\end{lemma}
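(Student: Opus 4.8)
The plan is to integrate the linearized operator against the function $w = h$, exploiting the symmetry property established in Lemma \ref{Lemma-symmetric-operator} together with the fact that $v \in {\rm Ker}(L_h)$ means $L_h(v) = 0$. Applying Lemma \ref{Lemma-symmetric-operator} with $w = h$ gives
\begin{equation*}
0 = \int_{\mathcal{C}_\theta} h\, L_h(v) = \int_{\mathcal{C}_\theta} v\, L_h(h),
\end{equation*}
so everything reduces to computing $L_h(h)$ explicitly. First I would substitute $v = h$ into the formula for the linearized operator and use the Euler-type identity $\sum_{i,j} c(W)_{ij}(h_{ij} + h\delta_{ij}) = \sum_{i,j} c(W)_{ij} W_{ij} = (n+1)\det(W_{ij})$, which holds because $c(W)_{ij}$ is the cofactor matrix of $W_{ij}$ and $W_{ij} = h_{ij} + h\delta_{ij}$. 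This is the standard homogeneity identity for the determinant.

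Second, I would plug in $\det(W_{ij}) = \frac{1}{h\phi(\ell/h)} f_t$ from the equation \eqref{Robin-problem-of-the-Monge-Ampere-equation-f-t}, since $h$ solves that problem for the relevant value of $t$. This turns the first term into $(n+1)\frac{1}{h\phi(\ell/h)} f_t$. The second term of $L_h(h)$ is $\frac{h}{h^2\phi(\ell/h)}\bigl(1 - \frac{\ell\phi'(\ell/h)}{h\phi(\ell/h)}\bigr) f_t = \frac{1}{h\phi(\ell/h)}\bigl(1 - \frac{\ell\phi'(\ell/h)}{h\phi(\ell/h)}\bigr) f_t$. Adding the two contributions yields
\begin{equation*}
L_h(h) = \frac{f_t}{h\phi(\ell/h)}\left(n + 2 - \frac{\ell\phi'(\ell/h)}{h\phi(\ell/h)}\right)
= -\frac{f_t}{h\phi(\ell/h)}\left(\frac{\ell\phi'(\ell/h)}{h\phi(\ell/h)} - n - 2\right).
\end{equation*}
Here I should double-check the index bookkeeping: the cofactor identity gives the coefficient $n+1$ (the matrix is $n\times n$, so $\sum c(W)_{ij}W_{ij} = n\det W$, but one must track whether the paper's normalization of $c(W)_{ij}$ absorbs a factor), and reconciling this with the target expression $\frac{\ell\phi'(\ell/h)}{h\phi(\ell/h)} - n - 1$ appearing in the statement will pin down the precise constant.

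Third, combining $0 = \int_{\mathcal{C}_\theta} v\, L_h(h)$ with the formula for $L_h(h)$ gives
\begin{equation*}
\int_{\mathcal{C}_\theta} \left(\frac{\ell\phi'(\ell/h)}{h\phi(\ell/h)} - n - 1\right) v\, \frac{f_t}{h\phi(\ell/h)} = 0,
\end{equation*}
and since $\det(h_{ij} + h\delta_{ij}) = \frac{f_t}{h\phi(\ell/h)}$, this is exactly the claimed identity
\begin{equation*}
\int_{\mathcal{C}_\theta}\left(\frac{\ell\phi'(\ell/h)}{h\phi(\ell/h)} - n - 1\right) v \det(h_{ij} + h\delta_{ij}) = 0.
\end{equation*}
The main obstacle, if there is one, is purely arithmetic: getting the constant right in the cofactor identity $\sum_{i,j} c(W)_{ij}(h_{ij} + h\delta_{ij})$ and making sure the "$+1$" versus "$+2$" discrepancy is resolved correctly — the extra $+1$ beyond $n+1$ must come from the $v\delta_{ij}$ piece inside $c(W)_{ij}(v_{ij} + v\delta_{ij})$, and one has to verify that $\sum_i c(W)_{ii} = \sum_{i} \partial_{W_{ii}} \det W$ combines with $\sum_{ij} c(W)_{ij} h_{ij}$ to produce precisely the factor matching the statement. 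Once the constants are pinned down, nothing else is delicate: the symmetry lemma does all the real work, and $h \in \mathcal{H}$ is automatic so the pairing is legitimate.
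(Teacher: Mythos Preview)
Your approach is correct and essentially identical to the paper's: the paper multiplies $L_h(v)=0$ by $h$, integrates, and moves the derivatives from $v$ to $h$ via two integrations by parts (checking that the boundary terms vanish from \eqref{Preliminaries-formula-2} and the Robin condition), which is precisely the content of Lemma~\ref{Lemma-symmetric-operator} with $w=h$ that you invoke as a black box. Your arithmetic uncertainty resolves cleanly: since $W$ is $n\times n$ the cofactor identity gives $\sum_{i,j} c(W)_{ij}W_{ij}=n\det W$, and adding the $+1$ from the zeroth-order term $\frac{1}{h\phi(\ell/h)}\bigl(1-\tfrac{\ell\phi'(\ell/h)}{h\phi(\ell/h)}\bigr)f_t=\bigl(1-\tfrac{\ell\phi'(\ell/h)}{h\phi(\ell/h)}\bigr)\det W$ produces exactly the factor $n+1$ in the statement.
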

\begin{proof}
From $v\in{\rm Ker}(L_h)$ and \eqref{Robin-problem-of-the-Monge-Ampere-equation-f-t}, we have
\begin{equation*}
\sum_{i,j=1}^nc(W)_{ij}(v_{ij}+v\delta_{ij})+\frac{v}{h}\left(1-\frac{\ell\phi'\left(\frac{\ell}{h}\right)}
{h\phi\left(\frac{\ell}{h}\right)}\right)\det(h_{ij}+h\delta_{ij})=0.
\end{equation*}
Multiplying the above equality with $h$ and integrating over $\mathcal{C}_\theta$ and applying integration by parts twice, we get
\begin{equation}\label{Open-proof-formula-1}
\begin{aligned}
&\int_{\mathcal{C}_\theta}\left(\frac{\ell\phi'\left(\frac{\ell}{h}\right)}{h\phi\left(\frac{\ell}{h}\right)}-1\right)v\det(h_{ij}+h\delta_{ij})\\
=&\sum_{i,j=1}^n\int_{\mathcal{C}_\theta}h\,c(W)_{ij}(v_{ij}+v\delta_{ij})\\
=&n\int_{\mathcal{C}_\theta}v\det(h_{ij}+h\delta_{ij})+\sum_{i,j=1}^n\int_{\partial\mathcal{C}_\theta}c(W)_{ij}
(hv_i\langle\mu,e_j\rangle-vh_i\langle\mu,e_j\rangle).
\end{aligned}
\end{equation}
From \eqref{Preliminaries-formula-2}, we have
\begin{equation*}
(\nabla^2h+h\delta_{\mathbb{S}^n})(e_i,\mu)=0\ \ \mbox{on}\ \partial\mathcal{C}_\theta,\ 1\leqslant i\leqslant n-1.
\end{equation*}
Together with Robin boundary conditions of $h$ and $v$, we get
\begin{equation}\label{Open-proof-formula-2}
\sum_{i,j=1}^n\int_{\partial\mathcal{C}_\theta}c(W)_{ij}(hv_i\langle\mu,e_j\rangle-vh_i\langle\mu,e_j\rangle)=
\int_{\partial\mathcal{C}_\theta}\cot\theta c(W)_{nn}(hv-vh)=0.
\end{equation}
Combining \eqref{Open-proof-formula-1} with \eqref{Open-proof-formula-2}, we obtain
\begin{equation*}
\int_{\mathcal{C}_\theta}\left(\frac{\ell\phi'\left(\frac{\ell}{h}\right)}{h\phi\left(\frac{\ell}{h}\right)}-n-1\right)
v\det(h_{ij}+h\delta_{ij})=0.
\end{equation*}
This completes the proof of this lemma.
\end{proof}

Now, we give the proof of Theorem \ref{Main-Theorem}.
\begin{proof}[Proof of Theorem \ref{Main-Theorem}]
It is easy to see that $h=\ell$ is a solution to \eqref{Robin-problem-of-the-Monge-Ampere-equation-f-t} for $t=0$. The closeness of $\mathcal{I}$ comes from the a priori estimates \eqref{High-order-estimates}. For any solution $h$ to \eqref{Robin-problem-of-the-Monge-Ampere-equation-f-t}, Lemma \ref{Lemma-symmetric-operator} and Lemma \ref{Lemma-ker-space} imply that
\begin{equation*}
\begin{aligned}
&\text{Range}(L_h)=(\text{Ker}(L^*_h))^\bot=(\text{Ker}(L_h))^\bot\\
=&\bigg\{v\in\mathcal{H}\,:\,\int_{\mathcal{C}_\theta}\left(\frac{\ell\phi'\left(\frac{\ell}{h}\right)}
{h\phi\left(\frac{\ell}{h}\right)}-n-1\right)v\det(h_{ij}+h\delta_{ij})=0\bigg\}^\bot=\mathcal{H}.
\end{aligned}
\end{equation*}
That is, $L_h$ is surjective. Then, the openness of $\mathcal{I}$ follows from the implicit function theorem, and we obtain the existence of solutions to \eqref{Robin-problem-of-the-Monge-Ampere-equation}.

Next, we prove the uniqueness part. Let the support function $h$ of $\widehat{\Sigma}\in\mathcal{K}_\theta^\circ$ satisfy \eqref{Robin-problem-of-the-Monge-Ampere-equation}, and $|\widehat{\Sigma}|=1$. By the Orlicz-Minkowski inequality \eqref{Orlicz-Minkowski-inequality}, we have
\begin{equation*}
V_\phi(\widehat{\Sigma},\widehat{\mathcal{C}_\theta})\geqslant V(\widehat{\Sigma})\,
\phi\left(\frac{V(\widehat{\mathcal{C}_\theta})^\frac{1}{n+1}}{V(\widehat{\Sigma})^\frac{1}{n+1}}\right)
=\phi\left(|\widehat{\mathcal{C}_\theta}|^\frac{1}{n+1}\right).
\end{equation*}
Meanwhile, we note that
\begin{equation*}
V_\phi(\widehat{\Sigma},\widehat{\mathcal{C}_\theta})=\frac{1}{n+1}\int_{\mathcal{C}_\theta}\phi\left(\frac{\ell}{h}\right)h
\det(h_{ij}+h\delta_{ij})\,d\xi=\frac{1}{n+1}\int_{\mathcal{C}_\theta}f\,d\xi,
\end{equation*}
then
\begin{equation*}
\frac{1}{n+1}\int_{\mathcal{C}_\theta}f\geqslant\phi(|\widehat{\mathcal{C}_\theta}|^\frac{1}{n+1}).
\end{equation*}
If equality holds in above inequality and $\phi$ is strictly convex, then $\widehat{\Sigma}$ and $\widehat{\mathcal{C}_\theta}$ are dilates from the Orlicz-Minkowski inequality \eqref{Orlicz-Minkowski-inequality}. That is, $\Sigma$ is the spherical cap $|\widehat{\mathcal{C}_\theta}|^{-\frac{1}{n+1}}\mathcal{C}_\theta$.
\end{proof}

\vskip 5mm \noindent  {\bf Acknowledgement.} This paper is supported in part by the NSFC (No. 12371060), Shaanxi Fundamental Science Research Project for Mathematics and Physics (No. 22JSZ012).

\vskip 3mm

\vskip 2mm \noindent Xudong Wang, \ \ \ {\small \tt xdwang@snnu.edu.cn}\\
{ \em School of Mathematics and Statistics, Shaanxi Normal University, Xi'an, 710119, China}\\

\vskip 2mm \noindent Baocheng Zhu, \ \ \ {\small \tt bczhu@snnu.edu.cn}\\
{ \em School of Mathematics and Statistics, Shaanxi Normal University, Xi'an, 710119, China}\\

\end{document}